\documentclass{article}

\usepackage[a4paper]{geometry}
\usepackage{amsmath,amssymb,amsthm,bm}
\usepackage{algorithm,algorithmic}
\usepackage{multirow}
\usepackage{graphicx}
\usepackage[colorlinks=false,pdfborder={0 0 0}]{hyperref}
\usepackage{authblk}

\graphicspath{{fig/}}

\newcommand*{\urf}{\bm{u}}

\newcommand{\herm}{^*}
\newcommand{\iherm}{^{-*}}
\newcommand{\fro}{\mathsf{F}}
\newcommand*{\abs}[1]{\lvert#1\rvert}
\newcommand*{\norm}[1]{\lVert#1\rVert}

\newcommand*{\hW}{\hat{W}}
\newcommand*{\hT}{\hat{T}}
\newcommand*{\hH}{\hat{H}}
\newcommand*\set[1]{\left\lbrace#1\right\rbrace}

\DeclareMathOperator{\diag}{diag}

\DeclareMathOperator{\rank}{rank}
\DeclareMathOperator{\range}{Range}
\DeclareMathOperator{\fl}{f{}l}

\DeclareMathOperator{\sign}{sign}

\renewcommand{\Re}{\mathop{\mathrm{Re}}}

\newenvironment{keywords}{\medskip\textbf{Keywords:}}{}
\newenvironment{AMS}{\medskip\textbf{AMS subject classifications (2020).}}{}

\theoremstyle{plain}
\newtheorem{theorem}{Theorem}
\newtheorem{lemma}{Lemma}

\theoremstyle{definition}
\newtheorem{assumption}{Assumption}
\newtheorem{example}{Example}

\theoremstyle{remark}
\newtheorem{remark}{Remark}

\title{On Two-Stage Householder Orthogonalization}
\author[1]{Zhuang-Ao He}
\author[1,2]{Meiyue Shao}
\affil[1]{School of Data Science, Fudan University, Shanghai 200433, China}
\affil[2]{MOE Key Laboratory for Computational Physical Sciences, Fudan
University, Shanghai 200433, China}
\date{\today}

\begin{document}

\maketitle

\begin{abstract}
Two-stage orthogonalization is essential in numerical algorithms such as
Krylov subspace methods.
For this task we need to orthogonalize a matrix \(A\) against another matrix
\(V\) with orthonormal columns.
A common approach is to employ the block Gram--Schmidt algorithm.
However, its stability largely depends on the condition number of \([V,A]\).
While performing a Householder orthogonalization on \([V,A]\) is
unconditionally stable, it does not utilize the knowledge that~\(V\) has
orthonormal columns.
To address these issues, we propose a two-stage Householder orthogonalization
algorithm based on the generalized Householder transformation.
Instead of explicitly orthogonalizing the entire \(V\), our algorithm only
needs to orthogonalizes a square submatrix of \(V\).
Theoretical analysis and numerical experiments demonstrate that our method is
also unconditionally stable.

\begin{keywords}
Orthogonalization,
QR factorization,
generalized Householder transformation,
rounding error analysis
\end{keywords}

\begin{AMS}
65F25, 65G50
\end{AMS}

\end{abstract}

\section{Introduction}
\label{sec:introduction}
Let \(V\in\mathbb{C}^{n\times k_0}\) have orthonormal columns (i.e.,
\(V\herm V=I_{k_0}\)), and \(A\in\mathbb{C}^{n\times k}\), where
\(k_0+k\ll n\).
In this paper, we study the two-stage orthogonalization problem---finding a
matrix \(Q\in\mathbb{C}^{n\times k}\) with orthonormal columns such that
\(\range([V,A])\subseteq\range([V,Q])\) and \(\range(Q)\perp\range(V)\).
This problem naturally arises in the block Krylov subspace methods for solving
linear systems~\cite{BDJ2006,XAD2024} and eigenvalue problems~\cite{DSYG2018,HL2006,
Knyazev2001}.
Theoretically, the problem is easy to solve because we can simply compute the
QR factorization of \([V,A]\).
Because \(V\) is already known to be orthonormal, we shall make use of this
knowledge to design a fast and stable approach.

Since \(V\) is orthonormal, we can first perform an \emph{inter-block
orthogonalization} step to orthogonalize \(A\) against \(V\), and then
perform an \emph{intra-block orthogonalization} step to compute the QR
factorization of the remaining \(A\).
This framework is called the \emph{BCGS framework}~\cite{BS2013,CLR2021,
CLRT2022}, which reads
\begin{subequations}
\label{eq:BQR}
\begin{align}
A&\gets A-VV\herm A,\label{eq:BQR-1}\\
A&=QR.\label{eq:BQR-2}
\end{align}
\end{subequations}
There are many choices of the intra-block orthogonalization
step~\eqref{eq:BQR-2}, including Householder-QR~\cite{DGHL2012,
Householder1958}, Givens-QR~\cite{Givens1958}, Gram--Schmidt
process~\cite{LBG2013}, Cholesky-QR~\cite{FKNYY2020}, and also
SVQB~\cite{SW2002} if \(R\) does not need to be upper triangular.

However, the problem is not as trivial as it may appear.
In practice, the input \([V,A]\) is sometimes very ill-conditioned or even
rank deficient~\cite{DSYG2018,HL2006}.
In this case the BCGS framework can fail to produce the desired orthogonal
basis in the presence of rounding errors.
For example, let us consider
\begin{align}
\label{eq:badbcg}
V=\frac12\begin{bmatrix}
\sqrt2 & \sqrt2 \\
-\sqrt2 & \sqrt2 \\
0 & 0 \\
0 & 0
\end{bmatrix},
\qquad
A=\begin{bmatrix}
1&1\\
1&1\\
10^{-30}&0\\0&10^{-30}
\end{bmatrix}.
\end{align}
The output of the BCGS framework (using almost any algorithm for the
intra-block orthogonalization step~\eqref{eq:BQR-2}) yields
\[
\bigl\lVert[V,Q]\herm[V,Q]-I_4\bigr\rVert_2\approx1.
\]
We remark that reorthogonalization does \emph{not} always produce satisfactory
results; see Table~\ref{tab:reorth}.
Therefore, the BCGS framework, even with reorthogonalization, is not always
numerically reliable.
In addition, when taking into account the cost of reorthogonalization, BCGS
can be more expensive than the most naive approach---directly computing the QR
factorization of \([V,A]\).

\begin{table}[!tb]
\centering
\caption{Loss of orthogonality under various settings (under double precision).}
\label{tab:reorth}
\begin{tabular}{cc}
\hline
Procedure & \(\vphantom{\Big|}\bigl\lVert[V,Q]\herm[V,Q]-I_4\bigr\rVert_2\) \\
\hline
\eqref{eq:BQR-1}, \eqref{eq:BQR-2} &
\(1.0\times10^0\) \\
\eqref{eq:BQR-1}, \eqref{eq:BQR-1}, \eqref{eq:BQR-2} &
\(9.8\times10^{-2}\) \\
\eqref{eq:BQR-1}, \eqref{eq:BQR-2}, \eqref{eq:BQR-1}, \eqref{eq:BQR-2} &
\(7.0\times10^{-2}\) \\
\eqref{eq:BQR-1}, \eqref{eq:BQR-1}, \eqref{eq:BQR-1}, \eqref{eq:BQR-2} &
\(2.2\times10^{-16}\) \\
\eqref{eq:BQR-1}, \eqref{eq:BQR-2}, \eqref{eq:BQR-1}, \eqref{eq:BQR-2},
\eqref{eq:BQR-1}, \eqref{eq:BQR-2} &
\(2.2\times10^{-16}\) \\
\hline
\end{tabular}
\end{table}

In this paper, we propose an algorithm based on the generalized Householder
transformation~\cite{BJ1974,Kaufman1987,SP1988} to perform the inter-block
orthogonalization~\eqref{eq:BQR-1}.
As for the intra-block orthogonalization step~\eqref{eq:BQR-2}, we mainly
focus on Householder-QR, which can achieve perfect orthogonality even for
ill-conditioned or rank deficient inputs.

The rest of this paper is organized as follows.
In Section~\ref{sec:existing}, we summarize some existing approaches to our
two-stage orthogonalization task.
In Section~\ref{sec:block}, we propose a two-stage Householder-QR algorithm
with three algorithmic variants.
A rounding error analysis is provided in Section~\ref{sec:rounding}.
In Section~\ref{sec:extensions}, we discuss how to extend our algorithm to the
context of non-standard inner products.
Numerical experiments in Section~\ref{sec:experiments} demonstrate the
accuracy and efficiency of our algorithm.

Throughout the paper, we adopt the MATLAB colon notation to describe
submatrices:
\(A_{i:j,k:l}\) is the submatrix of matrix \(A\) with row indices \(i\),
\(\dotsc\), \(j\) and column indices \(k\), \(\dotsc\), \(l\).
When the indices are omitted in the colon notation (e.g., \(A_{i:j,:}\) or
\(A_{:,k:l}\)), all indices in the corresponding dimension are included.

\section{Existing approaches}
\label{sec:existing}

\subsection{BCGS}
The BCGS framework as shown in~\eqref{eq:BQR} is a natural way to make use of
the orthogonality of~\(V\).
Typically, intra-block orthogonality is relatively straightforward to maintain,
while inter-block orthogonalization poses greater challenges.
To enhance orthogonality, reorthogonalization is often necessary.
Stathopoulos and Wu proposed the GS-SVQB method in~\cite{SW2002}, and Barlow
and Smoktunowicz introduced the BCGS2 method in~\cite{BS2013}.
Both methods improve the inter-block orthogonality by iterating inter-block
and intra-block orthogonalization steps.
Stewart proposed a method in~\cite{Stewart2008} that incorporates
randomization to tackle orthogonality faults during the orthogonalization
process.
However, Stewart's method is not suitable for high performance computing
because it it requires a large number of orthogonalization steps performed
sequentially.

\subsection{BMGS}
The BMGS framework~\cite{Barlow2019,CLRT2022} is an alternative to the BCGS
framework.
The matrix \(V\) is partitioned into \(V=[V_1,\dotsc,V_p]\).
Then the inter-block orthogonalization step~\eqref{eq:BQR-1} in BCGS is
replaced by
\[
A\gets(I-V_pV_p\herm)\cdots(I-V_1V_1\herm)A.
\]
The intra-block orthogonalization step remains the same as BCGS.
The motivation behind BMGS is the superior numerical stability of MGS compared
to CGS.

Due to the increased number of synchronization points compared to BCGS, BMGS
has a larger communication cost.
To address this issue, several low-synchronization variants of BMGS have been
proposed in~\cite{Barlow2019,CLRT2022}.
The basic idea is to utilize a WY-like representation~\cite{SV1989}, thereby
converting the process into
\[
A\gets(I-VTV\herm)A,
\]
where \(T\) is a lower triangular matrix associated with \(V\).
\footnote{In exact arithmetic, we have \(T=I\).}

Although BMGS is numerically more stable than BCGS, the stability also relies
on reorthogonalization, and can still lose orthogonality for extremely
ill-conditioned inputs (e.g., the example in~\eqref{eq:badbcg}).

\subsection{Cholesky-QR}
The Cholesky-QR algorithm can also be used to orthogonalize \([V,A]\) while
still making use of the orthogonality of \(V\);
see, e.g., \cite{CLR2021}.
Suppose that
\[
\begin{bmatrix}V,A\end{bmatrix}\herm\begin{bmatrix}V,A\end{bmatrix}=
\begin{bmatrix}
I_{k_0} &V\herm A\\ A\herm V& A\herm A
\end{bmatrix}=
\begin{bmatrix}
I_{k_0} &0\\ A\herm V& R\herm
\end{bmatrix}
\begin{bmatrix}
I_{k_0} &V\herm A\\ 0& R
\end{bmatrix},
\]
where \(R\herm\) is the Cholesky factor of \(A\herm A-(A\herm V)(V\herm A)\).
Then
\[
Q=(A-VV\herm A)R^{-1}.
\]
Reorthogonalization is also required in order to maintain the inter-block
orthogonality.
However, Cholesky-QR is susceptible to break down when applied to ill-conditioned matrices,
such as the example in \eqref{eq:badbcg}.
\section{Two-stage Householder-QR}
\label{sec:block}

A naive approach to achieving inter-block orthogonality would be to perform
Householder-QR on \([V,A]\), which would waste the knowledge that
\(V\herm V=I_{k_0}\).
In the following we discuss how to exploit this information to perform a
two-stage Householder-QR orthogonalization on \([V,A]\).
We shall demonstrate how some old techniques proposed in~\cite{BJ1974,
Kaufman1987,SP1988} decades ago can be used to solve this problem.

\subsection{Generalized Householder transformation}
Let \(X\), \(Y\in\mathbb C^{n\times m}\) such that \(X\herm X=Y\herm Y\).
Suppose that \(T=W\herm X\) is nonsingular, where \(W=X-Y\).
Then
\begin{equation}
\label{eq:generalized-Householder}
H=I_n-WT^{-1}W\herm
\end{equation}
is called a \emph{generalized Householder transformation}~\cite{BJ1974,
Kaufman1987,SP1988}.
It can be verified that \(H\herm H=I_n\) and \(HX=Y\).
Moreover, it is shown in~\cite{SB1995} that for every unitary matrix
\(H\in\mathbb C^{n\times n}\), there exist \(W\in\mathbb C^{n\times m}\) and
\(T\in\mathbb C^{m\times m}\) such that~\eqref{eq:generalized-Householder}
holds, where \(m=\rank(H-I_n)\).
Equation~\eqref{eq:generalized-Householder} is known as the
\emph{basis--kernel representation} of \(H\).

In order to orthogonalize \([V,A]\), the first stage is to perform \(k_0\)
Householder transformations to map \([I_{k_0},0]\herm\) onto \(V\).
These Householder transformations can be replaced by one generalized
Householder transformation with \(X=[I_{k_0},0]\herm\) and \(Y=V\).
In fact, the choice of \(X\) can be relaxed to \(X=[P\herm,0]\herm\), where
\(P\in\mathbb C^{k_0\times k_0}\) is an arbitrary unitary matrix.
Then the generalized Householder transformation becomes a unitary matrix of
the form \(H=[VP\herm,V_{\perp}]\) such that
\[
H\begin{bmatrix} P \\ 0 \end{bmatrix}=V.
\]

In the second stage, we first formulate
\[
H\herm A=\begin{bmatrix} PV\herm A \\ V_{\perp}\herm A \end{bmatrix}.
\]
Because \(H\herm A\) can be computed using the basis--kernel representation of
\(H\), the matrix \(V_\perp\), which is typically very large, does \emph{not}
need to be explicitly formed here.
Then we compute a smaller QR factorization \(V_{\perp}\herm A=\underline{Q}R\).
Let \(S=V\herm A\) and
\[
Q=H\begin{bmatrix} 0 \\ ~\underline{Q}~\end{bmatrix}.
\]
Then
\begin{equation}
\label{eq:vavq}
[V,A]=H\begin{bmatrix} P & 0~ \\ 0 & \underline{Q}~\end{bmatrix}
\cdot\begin{bmatrix} I_{k_0} & S \\ 0 & R \end{bmatrix}
=[V,Q]\begin{bmatrix} I_{k_0} & S \\ 0 & R \end{bmatrix}
\end{equation}
is the desired QR factorization of the input.
This two-stage Householder-QR orthogonalization process is summarized as
Algorithm~\ref{alg:main}.

\begin{algorithm}[!tb]
\caption{Two-stage Householder-QR.}
\label{alg:main}
\begin{algorithmic}[1]
\REQUIRE A matrix \(V\in\mathbb C^{n\times k_0}\) with orthonormal columns,
and another matrix \(A\in\mathbb C^{n\times k}\).
\ENSURE Matrices \(Q\in\mathbb{C}^{n\times k}\),
\(R\in\mathbb{C}^{k\times k}\), and \(S\in\mathbb{C}^{k_0\times k}\) that
satisfy~\eqref{eq:vavq} and \([V,Q]\herm[V,Q]=I_{k_0+k}\).

\STATE Choose a unitary matrix \(P\in\mathbb C^{k_0\times k_0}\).
\label{alg:line:init}
\STATE \(W\gets[P\herm,0]\herm-V\).
\STATE \(T\gets I_{k_0}-V_{1:k_0,:}\herm P\).
\STATE \(A\gets(I_n-WT\iherm W\herm) A\).
\STATE \(S\gets P\herm A_{1:k_0,:}\).
\STATE Compute the QR factorization \(A_{k_0+1:n,:}=\underline{Q}R\).
\STATE \(Q\gets(I_n-WT^{-1}W\herm)[0,\underline{Q}\herm]\herm\).
\end{algorithmic}
\end{algorithm}

\subsection{Choices of \(P\)}
\label{subsec:choices}
Line~\ref{alg:line:init} in Algorithm~\ref{alg:main} is vague since there are
different ways to choose the unitary matrix \(P\).
In the following, we discuss three particular choices.

\paragraph{First choice}
The simplest choice of \(P\) is a diagonal unitary matrix, e.g., \(P=I_{k_0}\).
As long as \(\lVert V_{1:k_0,:}\rVert_2<1\), the matrix
\(T=I_k-V_{1:k_0,:}\herm\) is nonsingular.
However, when \(\lVert V_{1:k_0,:}\rVert_2\) is close to one,
\(T\) can potentially be ill-conditioned.
In~\cite{BJ1974}, the authors suggest choosing
\[
P=-\diag\bigl\lbrace\sign(v_{1,1}),\dotsc,\sign(v_{k_0,k_0})\bigr\rbrace,
\]
where
\[
\sign(\mu)=\begin{cases}
1, & \text{if \(\Re\mu\geq0\),} \\
-1, & \text{if \(\Re\mu<0\).}
\end{cases}
\]
Unfortunately, such a choice of \(P\) cannot ensure a well-conditioned \(T\).

Noting that \(T=(P-V_{1:k_0,:})\herm P\), solving the linear system involving
\(T\) entails performing an LU factorization of \(P-V_{1:k_0,:}\).
A better choice proposed in~\cite{BDGJKN2015} is to determine \(P\) on-the-fly
when computing the LU factorization of \(P-V_{1:k_0,:}\), as is
illustrated in Algorithm~\ref{alg:mlu}.
Pivoting is unnecessary here since the magnitude of the diagonal entry never
drops below one.

\begin{algorithm}[!tb]
\caption{Modified LU factorization~\cite{BDGJKN2015}.}
\label{alg:mlu}
\begin{algorithmic}[1]
\REQUIRE A matrix \(Z\in\mathbb{C}^{k\times k}\) such that
\(\lVert Z\rVert_2\leq1\).
\ENSURE A unitary diagonal matrix \(P\in\mathbb C^{k\times k}\) and the LU
factorization of \(P-Z\).

\STATE \(P\gets I_k\), \(L\gets I_k\), \(U\gets0\)
\FOR{\(i=1\) to \(k\)}
\STATE \(P_{i,i}\gets-\sign(Z_{i,i})\).
\STATE \(U_{i,i}\gets P_{i,i}-Z_{i,i}\).
\STATE \(U_{i,i+1:k}\gets-Z_{i,i+1:k}\).
\STATE \(L_{i+1:k,i}\gets-Z_{i+1:k,i}U_{i,i}^{-1}\).
\STATE \(Z_{i+1:k,i+1:k}\gets Z_{i+1:k,i+1:k}+L_{i+1:k,i}U_{i,i+1:k}\).
\ENDFOR
\end{algorithmic}
\end{algorithm}

However, we remark that a generalized Householder transformation based on the
modified LU factorization is not always stable, because \(\kappa_2(U)\) can
be very large.
For instance, let
\begin{equation}
\label{eq:badmlu}
U=\begin{bmatrix}
1+\alpha/\sqrt{k_0} & -1/\sqrt{k_0} & \cdots & -1/\sqrt{k_0} \\
0 & 1+\alpha/\sqrt{k_0-1} & \cdots & -1/\sqrt{k_0-1} \\
\vdots & \vdots & \ddots & \vdots \\
0 & 0 & \cdots & 1+\alpha
\end{bmatrix}.
\end{equation}
When \(k_0=100\), \(\alpha=0.1\), we have
\(\kappa_2(U)\approx1.1\times10^{7}\).
In Appendix~\ref{sec:LU} we provide a proof to show that the matrix \(U\)
in~\eqref{eq:badmlu} is a feasible output of the modified LU factorization.
Example~\ref{ex:badmlu} in Section~\ref{sec:experiments} is also constructed
based on~\eqref{eq:badmlu}.

\paragraph{Second choice}
Suppose that the QR factorization of \(V_{1:k_0,:}\) is \(Q_1R_1\), where
\(R_1\) is an upper triangular matrix with nonnegative diagonal entries.
We can choose \(P=-Q_1\).
This is essentially the choice in~\cite{Kaufman1987}.
Then \(T=I_{k_0}+R_1\herm\) is a nonsingular lower triangular matrix that
satisfies \(\lVert T\rVert_2\leq2\) and \(T+T\herm=W\herm W\).
Partitioning \(W=[w_1,\dotsc,w_{k_0}]\) yields
\[
\lVert w_i\rVert_2^2=2T_{i,i}\geq2.
\]
Then \(0<(T^{-1})_{i,i}=T_{i,i}^{-1}\leq1\).
According to \cite[Theorem 12]{BS1994}, for \(i>j\) we have
\[
\bigl\lvert(T^{-1})_{i,j}\bigr\rvert
\leq\frac{4}{\lVert w_i\rVert_2\lVert w_j\rVert_2}
\leq2.
\]
Thus,
\[
\lVert T^{-1}\rVert_2
\leq\lVert T^{-1}\rVert_{\fro}
\leq\sqrt{2k_0^2-k_0}
<\sqrt2\,k_0,
\]
and \(\kappa_2(T)<2\sqrt2\,k_0\).
We conclude that the choice \(P=-Q_1\) always ensures a well-conditioned~\(T\).

\paragraph{Third choice}
If the polar decomposition \(V_{1:k_0,:}=Q_2M\) is computed,
we can choose \(P=-Q_2\) as suggested by~\cite{BJ1974,SP1988}.
Note that \(\lVert M\rVert_2\leq1\).
Then \(T=I+M\) is a positive definite matrix with \(\kappa_2(T)\leq2\).
Therefore, \(T^{-1}\) can be computed accurately (either explicitly or
implicitly by solving linear systems) using the Cholesky factorization.
This choice is better than the second choice in theory, because the upper bound
of \(\kappa_2(T)\) does not depend on \(k_0\).
However, in practice, computing the polar decomposition is in general more
expensive compared to computing the QR factorization as in the second choice.

\section{Rounding error analysis}
\label{sec:rounding}

In this section, we conduct a rounding error analysis to demonstrate that
Algorithm~\ref{alg:main} is \emph{unconditionally stable} (i.e., the stability
does not depend on \(\kappa_2([V,A])\).
We adopt the standard rounding model
\[
\fl(a\circ b)=(a\circ b)(1+\delta),
\qquad \abs{\delta}\le\urf,
\]
where \(\urf\) is the unit roundoff and \(\circ\in\set{+,-,\times,/}\).
Moreover, we define \(\gamma_n=(1-\urf)^{-n}-1\).
To begin with, we make some plausible assumptions.

\begin{assumption}
\label{assump:qr}
Assume that \(A\in\mathbb{C}^{m\times n}\).
For the QR factorization \(A=QR\), the computed factors \(\hat{Q}\) and
\(\hat{R}\) satisfy
\[
\lVert\hat{Q}\herm \hat{Q}-I_n\rVert_2\le\epsilon_{\mathrm{orth}}(m,n),
\qquad \lVert A-\hat{Q}\hat{R}\rVert_2
\le\epsilon_{\mathrm{res}}(m,n)\lVert A\rVert_2,
\]
where \(\epsilon_{\mathrm{orth}}(m,n)\) and \(\epsilon_{\mathrm{res}}(m,n)\)
are functions of \(\urf\), \(m\), and \(n\)
satisfying~\(\epsilon_{\mathrm{orth}}\),
\(\epsilon_{\mathrm{res}}\ll1\).
As a result, we have \(\lVert\hat{Q}\rVert_2\le\sqrt{2}\) and
\(\lVert\hat{R}\rVert_2\le\sqrt{2}\,\lVert A\rVert_2\).
\end{assumption}

\begin{assumption}
\label{assump:T}
We assume that the computed value \(\hT\) of \(T=(P-V_{1:k_0,:})\herm P\)
satisfies
\[
\lVert\Delta T\rVert_2=\lVert\hT-T\rVert_2\le\epsilon_T(k_0),
\]
where \(\epsilon_T(k_0)\) denotes a function of \(\urf\) and \(k_0\).
\end{assumption}

\begin{assumption}
\label{assump:sv}
Given \(A\in\mathbb{C}^{n\times n}\) and \(b\in\mathbb{C}^n\), the computed
solution \(\hat{x}\) of \(Ax=b\) satisfies
\[
(A+\Delta A)\hat{x}=b,
\qquad \lVert\Delta A\rVert_2\leq\epsilon_{\mathrm{sv}}(n)\lVert A\rVert_2,
\]
where \(\epsilon_{\mathrm{sv}}(n)\) denotes a function of \(\urf\)
and \(n\).
\end{assumption}

In Appendix~\ref{sec:assump}, we illustrate that these assumptions are valid
for a variety of algorithmic choices.
Rather than expressing the upper bounds as \(O(\urf)\) or concrete constants
like \(\gamma_n\), we express them dimension-dependent functions.
This allows us to trace and locate the source of rounding errors.

We first establish Theorem~\ref{thm:H}, which characterizes the quality of the
generalized Householder transformation if the condition \(X\herm X=Y\herm Y\)
is slightly violated.

\begin{theorem}
\label{thm:H}
Let \(X\), \(Y\in\mathbb{C}^{n\times k}\) and \(W=X-Y\).
Suppose that \(T=W\herm X\) is nonsingular and
\(\lVert X\herm X-Y\herm Y\rVert_2\le\delta\).
Then the matrix \(H=I_n-WT^{-1}W\herm\) satisfies
\begin{align*}
\lVert H\herm H-I_n\rVert_2\le\lVert WT\iherm\rVert_2^2\cdot\delta,
\qquad \lVert H\herm Y-I_n\rVert_2\le\lVert WT\iherm\rVert_2\cdot\delta,
\end{align*}
and \(HX=Y\).
In particular, if \(X\herm X=Y\herm Y\), then \(H\) is unitary.
\end{theorem}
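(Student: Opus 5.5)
The plan is to compute everything directly from the basis--kernel form $H=I_n-WT^{-1}W\herm$. All three claims follow from one algebraic identity, which expresses $W\herm W-T-T\herm$ through the defect $Y\herm Y-X\herm X$.

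\textbf{Step 1: $HX=Y$.} Since $T=W\herm X$, we have
\[
HX=X-WT^{-1}(W\herm X)=X-W=Y .
\]
This needs no hypothesis on $X\herm X-Y\herm Y$.

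\textbf{Step 2: formula for $H\herm H-I_n$.} Expanding the product gives
\[
H\herm H-I_n=-WT\iherm W\herm-WT^{-1}W\herm+WT\iherm W\herm WT^{-1}W\herm .
\]
Writing $W\herm=T\herm T\iherm$ in the first term and $W\herm=TT^{-1}$ in the second, the right-hand side factors as
\[
H\herm H-I_n=WT\iherm\bigl(W\herm W-T-T\herm\bigr)T^{-1}W\herm .
\]

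\textbf{Step 3: the key identity.} Substitute $W=X-Y$, $T=W\herm X$ and $T\herm=X\herm W$ into the middle factor:
\[
W\herm W-T-T\herm=(X-Y)\herm(X-Y)-(X-Y)\herm X-X\herm(X-Y)=Y\herm Y-X\herm X .
\]
This step is the main point of the proof, and the only one that requires care with the bookkeeping. It turns the Step 2 formula into
\[
H\herm H-I_n=WT\iherm\bigl(Y\herm Y-X\herm X\bigr)\bigl(WT\iherm\bigr)\herm ,
\]
where we used $T^{-1}W\herm=(WT\iherm)\herm$. Submultiplicativity then gives
\[
\lVert H\herm H-I_n\rVert_2\le\lVert WT\iherm\rVert_2^2\,\delta .
\]

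\textbf{Step 4: the second bound.} I read the second inequality in the statement as a bound on $\lVert H\herm Y-X\rVert_2$, since $H\herm Y$ is an $n\times k$ matrix and should be compared with $X$; the ``$I_n$'' there appears to be a typo. Using Step 1, $H\herm Y=H\herm HX$. The factorization from Step 3 then gives
\[
H\herm Y-X=WT\iherm\bigl(Y\herm Y-X\herm X\bigr)T^{-1}W\herm X .
\]
Since $T^{-1}W\herm X=T^{-1}T=I_k$, this simplifies to
\[
H\herm Y-X=WT\iherm\bigl(Y\herm Y-X\herm X\bigr),
\]
and therefore
\[
\lVert H\herm Y-X\rVert_2\le\lVert WT\iherm\rVert_2\,\delta .
\]

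\textbf{Step 5: the exact case.} If $X\herm X=Y\herm Y$, take $\delta=0$. Step 3 then gives $H\herm H=I_n$, and since $H$ is square, $H$ is unitary.
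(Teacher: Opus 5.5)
Your proof is correct and is essentially the paper's argument. You use the same factorization $H\herm H-I_n=WT\iherm(W\herm W-T-T\herm)T^{-1}W\herm$ with $W\herm W-T-T\herm=Y\herm Y-X\herm X$, and your reading of the second bound as $\lVert H\herm Y-X\rVert_2\le\lVert WT\iherm\rVert_2\,\delta$ is exactly what the paper's proof establishes. The only difference is inessential: the paper expands $H\herm Y-X$ directly, whereas you write it as $(H\herm H-I_n)X$ and use $T^{-1}W\herm X=I_k$.

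One cosmetic slip is in Step~2: ``writing $W\herm=T\herm T\iherm$'' literally asserts $W\herm=I_k$. What you mean is inserting $TT^{-1}$ into the first term and $T\iherm T\herm$ into the second, and the factorization you obtain is right.
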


\begin{proof}
We have
\begin{align*}
H\herm H&=I_n-WT\iherm\big(T+T\herm-W\herm W\big)T^{-1}W\herm.
\end{align*}
Hence
\begin{align*}
\lVert H\herm H-I_n\rVert_2&=\lVert WT\iherm\big(T+T\herm-W\herm W\big)T^{-1}W\herm\rVert_2\\
&=\lVert WT\iherm\big(X\herm X-Y\herm Y\big)T^{-1}W\herm\rVert_2\\
&\le\lVert WT\iherm\rVert_2^2\cdot\delta.
\end{align*}
The other bound follows that
\begin{align*}
\lVert H\herm Y-X\rVert_2&=\lVert Y-WT\iherm(-X\herm W+X\herm W+W\herm Y)-X\rVert_2\\
&=\lVert WT\iherm(X\herm X-Y\herm Y)\rVert_2\\
&\le\lVert WT\iherm\rVert_2\cdot\delta.
\end{align*}
It can be easily verified that \(HX=Y\).
If \(X\herm X=Y\herm Y\), then we can choose \(\delta=0\), and thus \(H\) is
unitary.
\end{proof}

\begin{remark}
In Algorithm~\ref{alg:main}, our choice of \(X\) and \(Y\) satisfies
\[
\lVert X\herm X-Y\herm Y\rVert_2
=\lVert P\herm P-V\herm V\rVert_2
=\lVert I_{k_0}-V\herm V\rVert_2.
\]
Therefore, the orthogonality of \(H\) mainly depends on the orthogonality of
the input \(V\).
However, we remark that \(\lVert HX-Y\rVert_2\), which is zero in exact
arithmetic, is independent of \(\lVert X\herm X-Y\herm Y\rVert_2\).
\end{remark}

It should be emphasized that \(\lVert WT^{-1}\rVert_2\) and
\(\lVert WT\iherm\rVert_2\) may be significantly smaller than
\(\lVert T^{-1}\rVert_2\) when \(T\) is ill-conditioned.
We have the following lemma.

\begin{lemma}
\label{lem:WT}
Suppose \(W\in\mathbb{C}^{n\times k}\), \(T\in\mathbb{C}^{k\times k}\).
If \(\lVert W\herm W-T-T\herm\rVert_2\le\delta\), then
\[
\max\set{\lVert WT^{-1}\rVert_2,\lVert WT\iherm\rVert_2}
\leq\sqrt{\lVert T^{-1}\rVert_2^2\delta+2\lVert T^{-1}\rVert_2}.
\]
\end{lemma}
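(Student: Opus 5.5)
The plan is to bound \(\lVert WT^{-1}\rVert_2^2\) through the Gram matrix \((WT^{-1})\herm(WT^{-1})=T\iherm W\herm W T^{-1}\). The same argument, applied to \(WT\iherm\), gives the second bound.

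First, write \(W\herm W=T+T\herm+E\) with \(\lVert E\rVert_2\le\delta\). Then
\[
T\iherm W\herm W T^{-1}=T\iherm(T+T\herm)T^{-1}+T\iherm E T^{-1}=T\iherm+T^{-1}+T\iherm E T^{-1}.
\]
Next, take the 2-norm and use \(\lVert WT^{-1}\rVert_2^2=\lVert T\iherm W\herm WT^{-1}\rVert_2\) together with the triangle inequality. This gives
\[
\lVert WT^{-1}\rVert_2^2\le\lVert T\iherm\rVert_2+\lVert T^{-1}\rVert_2+\lVert T^{-1}\rVert_2^2\,\delta=2\lVert T^{-1}\rVert_2+\lVert T^{-1}\rVert_2^2\,\delta.
\]
Taking square roots yields the claimed bound for \(\lVert WT^{-1}\rVert_2\).

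For \(WT\iherm\), the Gram matrix is \(T^{-1}W\herm WT\iherm\). Expanding in the same way gives
\[
T^{-1}(T+T\herm)T\iherm+T^{-1}ET\iherm=T\iherm+T^{-1}+T^{-1}ET\iherm.
\]
This has the same norm bound, since \(\lVert T\iherm\rVert_2=\lVert T^{-1}\rVert_2\). Taking the maximum of the two bounds concludes the proof.

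There is no real obstacle here. The only point to check is that the cross terms collapse: \(T\iherm T T^{-1}=T^{-1}\) and \(T\iherm T\herm T^{-1}=T\iherm\). Because of this collapse, the bound involves \(\lVert T^{-1}\rVert_2\) linearly rather than quadratically when \(\delta=0\).
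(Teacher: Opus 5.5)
Your proof is correct and is essentially the paper's argument: write \(E=W\herm W-T-T\herm\), then use \(\lVert WT^{-1}\rVert_2^2=\lVert T\iherm ET^{-1}+T^{-1}+T\iherm\rVert_2\) and the triangle inequality, and do the same for \(WT\iherm\). One small slip in your closing remark is that the two identities are swapped, since \(T\iherm TT^{-1}=T\iherm\) and \(T\iherm T\herm T^{-1}=T^{-1}\); this does not affect your displayed expansion or the bound, because only the sum \(T^{-1}+T\iherm\) matters.
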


\begin{proof}
Let \(E=W\herm W-T-T\herm\).
Then
\[
\lVert WT^{-1}\rVert_2
=\sqrt{\lVert T\iherm W\herm WT^{-1}\rVert_2}
=\sqrt{\lVert T\iherm ET^{-1}+T^{-1}+T\iherm\rVert_2}
\leq\sqrt{\lVert T^{-1}\rVert_2^2\delta+2\lVert T^{-1}\rVert_2}.
\]
Similarly, we have
\(\lVert WT\iherm\rVert_2
\leq\sqrt{\lVert T^{-1}\rVert_2^2\delta+2\lVert T^{-1}\rVert_2}\).
\end{proof}

Our choice of \(X\) and \(Y\) in Algorithm~\ref{alg:main} satisfies
\[
\lVert X\herm X-Y\herm Y\rVert_2=\lVert W\herm W-T-T\herm\rVert_2.
\]
In the subsequent analysis, backward errors will be propagated on \(W\) and
\(T\).
The following theorem addresses the case where both \(W\) and \(T\) are
subject to perturbations.

\begin{theorem}
\label{thm:hH}
Under the assumptions of Theorem~\ref{thm:H},
suppose \(\hW=W+\Delta W\) and \(\hT=T+\Delta T\),
where \(\lVert\Delta W\rVert_2\le\delta_W\), \(\lVert\Delta T\rVert_2\le\delta_T\) and \(\delta_T\lVert T^{-1}\rVert_2\le1/2\).
Let
\[
\epsilon_1=\delta+2\delta_T+2\delta_W\norm{W}_2+\delta_W^2,\qquad
\eta=2\sqrt{\lVert T^{-1}\rVert_2^2\epsilon_1+\lVert T^{-1}\rVert_2}.
\]
Then \(\hH=I_n-\hW\hT^{-1}\hW\herm\) satisfies
\begin{align*}
\lVert\hH-H\rVert_2&\le(\delta_T+\delta_W^2) \eta^2+2\delta_W\cdot\eta,\\
\lVert\hH X-Y\rVert_2&\le\delta_W+
(\delta_T+\delta_W\lVert X\rVert_2)\eta,\\
\lVert\hH\herm Y-X\rVert_2&\le\delta_W
+(\delta+\delta_T+\delta_W\lVert Y\rVert_2)\eta.
\end{align*}
\end{theorem}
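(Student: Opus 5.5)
The plan is to control everything through a bound on \(\lVert\hW\hT^{-1}\rVert_2\) and \(\lVert\hW\hT\iherm\rVert_2\), obtained from Lemma~\ref{lem:WT} applied to the perturbed pair \((\hW,\hT)\). First, since \(\delta_T\lVert T^{-1}\rVert_2\le1/2\), a Neumann series argument gives that \(\hT\) is nonsingular with \(\lVert\hT^{-1}\rVert_2\le2\lVert T^{-1}\rVert_2\). Next, estimate the defect of \((\hW,\hT)\). Using \(T+T\herm-W\herm W=X\herm X-Y\herm Y\) from the proof of Theorem~\ref{thm:H}, we write
\[
\hW\herm\hW-\hT-\hT\herm=(W\herm W-T-T\herm)+W\herm\Delta W+\Delta W\herm W+\Delta W\herm\Delta W-\Delta T-\Delta T\herm .
\]
Its norm is at most \(\delta+2\delta_W\lVert W\rVert_2+\delta_W^2+2\delta_T=\epsilon_1\). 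Lemma~\ref{lem:WT} with \(\lVert\hT^{-1}\rVert_2\le2\lVert T^{-1}\rVert_2\) then yields
\[
\max\{\lVert\hW\hT^{-1}\rVert_2,\lVert\hW\hT\iherm\rVert_2\}\le\sqrt{4\lVert T^{-1}\rVert_2^2\epsilon_1+4\lVert T^{-1}\rVert_2}=\eta .
\]
This is exactly the definition of \(\eta\), which confirms the route. For the unperturbed quantities, Lemma~\ref{lem:WT} with defect \(\delta\le\epsilon_1\) similarly gives \(\lVert WT^{-1}\rVert_2,\lVert WT\iherm\rVert_2\le\eta\).

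For \(\lVert\hH-H\rVert_2\), we expand
\[
\hW\hT^{-1}\hW\herm-WT^{-1}W\herm=\hW(\hT^{-1}-T^{-1})\hW\herm+\Delta W\,T^{-1}\hW\herm+WT^{-1}\Delta W\herm .
\]
We then use \(\hT^{-1}-T^{-1}=-\hT^{-1}\Delta T\,T^{-1}\). The first term becomes \(\hW\hT^{-1}\Delta T\,T^{-1}\hW\herm\), and we must express \(T^{-1}\hW\herm=(\hW T\iherm)\herm\) in terms of \(\eta\). A cleaner split is to write everything as products of \(\hW\hT^{-1}\), \(WT^{-1}\) and their adjoints, e.g.
\[
\hH-H=\hW\hT^{-1}\Delta T\,T^{-1}W\herm-\hW\hT^{-1}\Delta W\herm-\Delta W\,T^{-1}W\herm+(\text{a correction from }\Delta W\text{ inside }\hT^{-1}\text{ factors}).
\]
The correction gives the \(\delta_W^2\eta^2\) term. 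I expect this bookkeeping, arranging the telescoping so that each term has at most two \(\eta\)-factors and matches \((\delta_T+\delta_W^2)\eta^2+2\delta_W\eta\), to be the main obstacle. I would try the identity
\[
\hW\hT^{-1}\hW\herm-WT^{-1}W\herm=\hW\hT^{-1}(\hT-T)T^{-1}\ldots
\]
carefully, possibly absorbing cross terms using \(\delta_W\lVert WT^{-1}\rVert\le\delta_W\eta\).

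For \(\hH X-Y\), we use \(HX=Y\) from Theorem~\ref{thm:H}, so \(W=X-Y\) and \(\hW\hT^{-1}\hW\herm X\) can be compared with \(W\). Writing \(\hW\herm X=T+\Delta W\herm X\) gives
\[
\hW\hT^{-1}\hW\herm X=\hW\hT^{-1}(\hT-\Delta T+\Delta W\herm X)=\hW+\hW\hT^{-1}(\Delta W\herm X-\Delta T).
\]
Hence
\[
\hH X-Y=X-\hW-Y-\hW\hT^{-1}(\cdots)=-\Delta W-\hW\hT^{-1}(\Delta W\herm X-\Delta T),
\]
so \(\lVert\hH X-Y\rVert_2\le\delta_W+(\delta_T+\delta_W\lVert X\rVert_2)\eta\). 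Similarly, \(\hH\herm Y=Y-\hW\hT\iherm\hW\herm Y\). Here we write
\[
\hW\herm Y=W\herm Y+\Delta W\herm Y=-T\herm+(X\herm X-Y\herm Y)+\Delta W\herm Y,
\]
using \(W\herm Y=W\herm X-W\herm W\) and \(T+T\herm-W\herm W=X\herm X-Y\herm Y\). Since \(T\herm=\hT\herm-\Delta T\herm\), this gives \(\hH\herm Y-X=-\Delta W+\hW\hT\iherm\bigl(\Delta T\herm-(X\herm X-Y\herm Y)-\Delta W\herm Y\bigr)\). The bound \(\delta_W+(\delta+\delta_T+\delta_W\lVert Y\rVert_2)\eta\) follows.
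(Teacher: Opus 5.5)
Your overall route is the paper's: the Neumann-type bound $\lVert\hT^{-1}\rVert_2\le2\lVert T^{-1}\rVert_2$, the defect estimate $\lVert\hW\herm\hW-\hT-\hT\herm\rVert_2\le\epsilon_1$, Lemma~\ref{lem:WT} to produce $\eta$, and the same algebraic identities for $\hH X-Y$ and $\hH\herm Y-X$.

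\textbf{Gap in the first inequality.} You never finish the bound on $\lVert\hH-H\rVert_2$, and your diagnosis of it is wrong. There is no ``correction from $\Delta W$ inside $\hT^{-1}$ factors.'' Substitute in turn $\hW\herm=W\herm+\Delta W\herm$, then $\hT^{-1}-T^{-1}=-\hT^{-1}\Delta T\,T^{-1}$, then $\hW=W+\Delta W$. This gives the exact identity
\[
\hH-H=\hW\hT^{-1}\Delta T\,T^{-1}W\herm-\hW\hT^{-1}\Delta W\herm-\Delta W\,T^{-1}W\herm .
\]
Every factor here is either $\hW\hT^{-1}$ or $T^{-1}W\herm=(WT\iherm)\herm$. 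You had already bounded both by $\eta$. Hence $\lVert\hH-H\rVert_2\le\delta_T\eta^2+2\delta_W\eta$, which is slightly stronger than the stated bound.

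Your first expansion also closes, once you note that Lemma~\ref{lem:WT} applies to the mixed pair $(\hW,T)$. Indeed $\lVert\hW\herm\hW-T-T\herm\rVert_2\le\delta+2\delta_W\lVert W\rVert_2+\delta_W^2\le\epsilon_1$, which gives $\lVert\hW T\iherm\rVert_2\le\eta$. This is exactly what the paper does. The paper then writes $\hW T^{-1}\hW\herm-WT^{-1}W\herm=WT^{-1}\Delta W\herm+\Delta W\,T^{-1}W\herm+\Delta W\,T^{-1}\Delta W\herm$. Its $\delta_W^2\eta^2$ term is only the crude estimate $\delta_W^2\lVert T^{-1}\rVert_2\le\delta_W^2\eta^2$ on the last piece, using $\eta^2\ge4\lVert T^{-1}\rVert_2$. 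It is not a structural correction.

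\textbf{Minor slip in the third bound.} The correct identity is $\hH\herm Y-X=\Delta W-\hW\hT\iherm\bigl(X\herm X-Y\herm Y+\Delta T\herm+\Delta W\herm Y\bigr)$. Your signs on $\Delta W$ and $\Delta T\herm$ are flipped, but this does not affect the norm estimate you draw from it.
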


\begin{proof}
We have
\begin{align*}
\lVert\hH-H\rVert_2&=\lVert\hW\hT^{-1}\hW\herm-WT^{-1}W\herm\rVert_2\\
&\le\lVert\hW\hT^{-1}\hW\herm-\hW T^{-1}\hW\herm\rVert_2
+\lVert\hW T^{-1}\hW\herm-WT^{-1}W\herm\rVert_2\\
&=\lVert\hW\hT^{-1}\Delta TT^{-1}\hW\herm\rVert_2
+\lVert WT^{-1}\Delta W\herm+\Delta W T^{-1}W\herm+\Delta WT^{-1}\Delta W\herm\rVert_2\\
&\le\delta_T\lVert\hW \hT^{-1}\rVert_2\lVert\hW T\iherm\rVert_2
+\delta_W(\lVert W T^{-1}\rVert_2+\lVert W T\iherm\rVert_2+\delta_W\lVert T^{-1}\rVert_2).
\end{align*}
Note that \(\lVert\hW\herm\hW-\hT-\hT\herm\rVert_2\le \epsilon_1\)
and \(\lVert\hW\herm\hW-T-T\herm\rVert_2\le \epsilon_1\).
By Lemma 2.3.3 in~\cite[Section~2.3.4]{GV2013}, we obtain
\[
\lVert\hT^{-1}\rVert_2\le\frac{\lVert T^{-1}\rVert_2}
{1-\lVert T^{-1}\Delta T\rVert_2}\le2\lVert T^{-1}\rVert_2.
\]
Using Lemma~\ref{lem:WT}, we have
\[
\max\set{\lVert\hW\hT^{-1}\rVert_2,\lVert\hW\hT\iherm\rVert_2,
\lVert\hW T^{-1}\rVert_2,\lVert\hW T\iherm\rVert_2}\leq\eta,
\]
and then
\[
\lVert\hH-H\rVert_2\le(\delta_T+\delta_W^2) \eta^2+2\delta_W\cdot\eta.
\]
Note that
\begin{align*}
\hH X&=X-\hW\hT^{-1}(W\herm X+\Delta T)
+\hW\hT^{-1}(\Delta T-\Delta W\herm X)\\
&=Y-\Delta W+\hW\hT^{-1}(\Delta T-\Delta W\herm X).
\end{align*}
Thus
\begin{align*}
\lVert\hH X-Y\rVert_2\le\delta_W+
\lVert\hW\hT^{-1}\rVert_2(\delta_T+\delta_W\lVert X\rVert_2).
\end{align*}
Additionally, it holds that
\begin{align*}
\label{eqn:H*Y}
\hH\herm Y&=Y+\hW\hT\iherm(X\herm W+\Delta T\herm)-\hW\hT\iherm(X\herm W+\Delta T\herm+\hW\herm Y)\\
&=X+\Delta W-\hW\hT\iherm(X\herm X-Y\herm Y+\Delta T\herm+\Delta W\herm Y).
\end{align*}
We can verify that
\[
\lVert\hH\herm Y-X\rVert_2\le\delta_W+\lVert\hW\hT\iherm\rVert_2(\delta+\delta_T+\delta_W\lVert Y\rVert_2).
\]
This completes the proof.
\end{proof}

The following theorem establishes the stability of Algorithm~\ref{alg:main}.
The proof is somewhat lengthy and we leave it in Appendix~\ref{sec:proof}.

\begin{theorem}
\label{thm:orth}
Suppose that \(\delta_1=\lVert V\herm V-I_{k_0}\rVert_2\ll1\)
and \(\delta_2=\lVert P\herm P-I_{k_0}\rVert_2\ll1\).
Denote the computed value of each algorithmic variable be the original variable wearing a hat.
We define
\begin{align*}
\delta_T&=\epsilon_T(k_0)+3\epsilon_{\mathrm{sv}}(k_0)
(1+\epsilon_T(k_0)),\\
\delta&=\delta_1+\delta_2,\\
\epsilon_1&=\delta+2\delta_T+16\sqrt{k_0}\gamma_{k_0+1}+8k_0\gamma_{k_0+1}^2,\\
\eta&=2\sqrt{\norm{T^{-1}}_2^2\epsilon_1+\norm{T^{-1}}_2},\\
\epsilon_2&=(\delta_T+8k_0\gamma_{k_0+1}^2) \eta^2+
4\sqrt{2k_0}\gamma_{k_0+1}\cdot\eta,\\
\epsilon_3&=2\sqrt{2k_0}\gamma_{n+2k_0+1}\cdot\eta
+\urf(\sqrt{1+\delta \eta^2}+\epsilon_2),\\
\epsilon_4&=\epsilon_2+\epsilon_3.
\end{align*}
If \(\delta_T\norm{T^{-1}}_2\le1/2\), then \(\hat{Q}\) satisfies
\begin{align*}
\lVert\hat{Q}\herm \hat{Q}-I\rVert_2
&\le\epsilon_{\mathrm{orth}}(n-k_0,k)
+2(\delta \eta^2+2\sqrt{k(1+\delta \eta^2)}\,\epsilon_4+k\epsilon_4^2),\\
\lVert V\herm \hat{Q}\rVert_\fro&\le \sqrt{2k}\,(2\sqrt{2k_0}\gamma_{k_0+1}+
\epsilon_1\eta+\sqrt{2}\epsilon_3),
\end{align*}
and the residual \(A-\hat{Q}\hat{R}-V\hat{S}\) satisfies
\begin{align*}
\frac{\lVert A-\hat{Q}\hat{R}-V\hat{S}\rVert_2}{\lVert A\rVert_2}
&\le\delta\eta^2+(\sqrt{1+\delta \eta^2}+\sqrt{k}\,\epsilon_4)^2
(\epsilon_{\mathrm{res}}(n-k_0,k)+\delta_2+3\sqrt{k}\,\epsilon_4+2k_0\gamma_{k_0}).
\end{align*}
\end{theorem}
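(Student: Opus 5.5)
We will follow the data flow of Algorithm~\ref{alg:main} and cast each computed quantity as the exact output of a slightly perturbed generalized Householder transformation, so that Theorem~\ref{thm:H}, Lemma~\ref{lem:WT} and Theorem~\ref{thm:hH} supply all the bounds. Set \(X=[P\herm,0]\herm\) and \(Y=V\), so that \(\lVert X\herm X-Y\herm Y\rVert_2\le\delta_1+\delta_2=\delta\). The computed \(\hW\) differs from \(W\) only in the first \(k_0\) rows, entrywise by at most \(\urf\), which gives a small \(\delta_W\). The solves with \(\hT\) and \(\hT\herm\) are covered by Assumption~\ref{assump:sv}, and we fold their backward errors into one perturbation of \(T\). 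Combined with Assumption~\ref{assump:T}, this perturbation is bounded by \(\delta_T=\epsilon_T+3\epsilon_{\mathrm{sv}}(1+\epsilon_T)\); the factor \(3\) accounts for \(\lVert\hT\rVert_2\le\lVert T\rVert_2+\epsilon_T\) with \(\lVert T\rVert_2\le2+\delta_2\). The products \(\hW\herm A\) and \(\hW(\cdot)\) are inner products of length essentially \(k_0+1\) on the nonzero rows. We will express them as exact products with perturbed \(W\), bounded columnwise by \(\gamma_{k_0+1}\) and converted to spectral norm with factors \(\sqrt{k_0}\) or \(\sqrt{2k_0}\) using \(\lVert W\rVert_2\le2\sqrt2\). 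This explains the terms \(16\sqrt{k_0}\gamma_{k_0+1}+8k_0\gamma_{k_0+1}^2\) in \(\epsilon_1\), which match \(2\delta_W\lVert W\rVert_2+\delta_W^2\) of Theorem~\ref{thm:hH}. Theorem~\ref{thm:hH} then gives \(\lVert\hH-H\rVert_2\le\epsilon_2\) with \(\eta\) as stated.

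The key step is to write the update \(\hat A=\fl\bigl((I-\hW\hT\iherm\hW\herm)A\bigr)\) columnwise as \((H\herm+E_j)a_j\). Here \(\lVert E_j\rVert_2\le\epsilon_2+\epsilon_3\). The term \(\epsilon_3\) collects the error of the final subtraction, the \(\gamma_{n+2k_0+1}\) term times \(\eta\), and a \(\urf\lVert\hH\rVert_2\) term, where \(\lVert\hH\rVert_2\le\sqrt{1+\delta\eta^2}+\epsilon_2\) by Theorem~\ref{thm:H}. The same reasoning applies to the final line \(\hat Q=\fl(\hH[0,\underline{\hat Q}\herm]\herm)\) with \(H\) in place of \(H\herm\). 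Because we only have columnwise backward errors, passing to the whole matrix costs \(\sqrt{k}\). This produces the \(\sqrt{k}\,\epsilon_4\) factors, and we write \(\hat Q=H\,[0,\underline{\hat Q}\herm]\herm+F\) with \(\lVert F\rVert_2\le\sqrt{2k}\,\epsilon_4\).

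For orthogonality we then expand \(\hat Q\herm\hat Q=Z\herm H\herm HZ+(\text{cross terms})+F\herm F\) with \(Z=[0,\underline{\hat Q}\herm]\herm\). We apply Theorem~\ref{thm:H}, which gives \(\lVert H\herm H-I\rVert\le\delta\eta^2\), together with Assumption~\ref{assump:qr}, which gives \(\lVert\underline{\hat Q}\rVert_2\le\sqrt2\) and the \(\epsilon_{\mathrm{orth}}\) term; this yields the stated bound. For \(V\herm\hat Q\) we write \(V\herm HZ=(H\herm V)\herm Z\) and use \(\lVert H\herm Y-X\rVert\) from Theorem~\ref{thm:hH}, which is of size \(\delta_W+\epsilon_1\eta\)-like. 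Since \(X\herm Z=0\) exactly, only these errors and \(V\herm F\) survive, which gives the Frobenius bound. For the residual we start from \(A=H(H\herm A)\) up to \(\delta\eta^2\), substitute \(\hat A=H\herm A+E\), the QR residual \(\epsilon_{\mathrm{res}}\), and the error of \(\hat S=\fl(P\herm\hat A_{1:k_0,:})\), which gives \(2k_0\gamma_{k_0}\) and \(\delta_2\). We then use \(HX=Y\) to reassemble the expression as \(V\hat S+\hat Q\hat R\), bounding each factor by \(\sqrt{1+\delta\eta^2}+\sqrt k\,\epsilon_4\).

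The main obstacle is the bookkeeping in the update step. We must show that the floating-point evaluation of \(\hW\hT\iherm\hW\herm a\) is the exact action of matrices within \(\epsilon_2+\epsilon_3\) of \(H\herm\), with the error amplified only by \(\eta\) and not by \(\lVert T^{-1}\rVert_2\) alone. The plan is to always keep \(\hT^{-1}\) attached to a factor \(\hW\), so that Lemma~\ref{lem:WT} applies with the perturbed pair, rather than ever bounding \(\lVert\hT^{-1}\rVert_2\) separately.
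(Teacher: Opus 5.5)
Your overall architecture matches the paper's Appendix~C: columnwise backward-error representations of the two applications of the transformation, Theorem~\ref{thm:H} with Assumption~\ref{assump:qr} for \(\hat Q\herm\hat Q-I\), and reassembly of the residual through \(H[P\herm,0]\herm=V\). The orthogonality and residual bounds go through essentially as you describe.

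The genuine gap is the bound on \(V\herm\hat Q\). You write \(\hat Q=HZ+F\) with the exact \(H\) and \(\norm{F}_2\le\sqrt{2k}\,\epsilon_4\), and then keep \(V\herm F\). But \(\epsilon_4=\epsilon_2+\epsilon_3\) with \(\epsilon_2\ge\delta_T\eta^2\), because the transformation actually applied is only known to be within \(O(\delta_T\eta^2)\) of \(H\). So this route can only give a bound of order \(\eta^2\urf\approx\kappa_2(T)\urf\). The stated bound is of order \(\eta\urf\): \(\delta_T\) enters it only through the term \(2\delta_T\eta\) of \(\epsilon_1\eta\), and through \(\urf\epsilon_2\) inside \(\epsilon_3\), which is second order. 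For large \(\eta\) your estimate is weaker by a factor of order \(\eta\), so it does not prove the statement; this \(\sqrt{\kappa_2(T)}\) versus \(\kappa_2(T)\) distinction is exactly what the remark after the theorem advertises. Also, for the exact \(H\) the relevant estimate is Theorem~\ref{thm:H}, giving \(\eta\delta\). The quantity \(\delta_W+\epsilon_1\eta\) from Theorem~\ref{thm:hH} refers to a perturbed matrix, so your sentence mixes the two.

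The missing step is to bypass \(H\) entirely here. Write \(\hat q_i=\tilde H_i[0,\underline{\hat q}_i\herm]\herm+\Delta q_i\) with \(\norm{\Delta q_i}_2\le\epsilon_3\norm{\underline{\hat q}_i}_2\). Here \(\tilde H_i=I-\tilde W_i\tilde T_i^{-1}\tilde W_i\herm\) is an \emph{exact} generalized Householder matrix built from column-dependent data, with \(\norm{\tilde W_i-W}_2\le2\sqrt{2k_0}\gamma_{k_0+1}\) and \(\norm{\tilde T_i-T}_2\le\delta_T\). The latter includes the backward error of that column's solve, so no single \(\hat H\) serves all columns. Now apply the third inequality of Theorem~\ref{thm:hH} to \(\tilde H_i\) itself. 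Since \([P\herm,0][0,\underline{\hat q}_i\herm]\herm=0\), this gives \(\norm{V\herm\hat q_i}_2\le(2\sqrt{2k_0}\gamma_{k_0+1}+\epsilon_1\eta+\sqrt2\,\epsilon_3)\norm{\underline{\hat q}_i}_2\). Summing over columns with \(\norm{\underline{\hat Q}}_{\fro}\le\sqrt{2k}\) yields the claim.

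A second, smaller problem is in the update step, and it is where the \(\gamma_{n+2k_0+1}\eta\) term of \(\epsilon_3\) comes from. The product \(\hW\herm x\) is not short: \(W=[P\herm,0]\herm-V\) is generally dense in all \(n\) rows. So it gives \(\abs{\Delta W_1}\le\gamma_n\abs{\hW}\), while \(\hW y\) with \(y\in\mathbb{C}^{k_0}\) gives \(\abs{\Delta W_2}\le\gamma_{k_0}\abs{\hW}\). Your claim of inner products of length ``essentially \(k_0+1\)'' is therefore wrong.

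As a result, the computed map \(I-(\hW+\Delta W_2)\tilde T^{-1}(\hW+\Delta W_1)\herm\) has different perturbations on its two sides. It is not of generalized Householder form, so Lemma~\ref{lem:WT} and Theorem~\ref{thm:hH} do not apply to it as it stands. The paper absorbs only \(\Delta W_2\) into \(\tilde W=\hW+\Delta W_2\) and moves \(\tilde W\tilde T^{-1}(\Delta W_2-\Delta W_1)\herm x\) into the forward error. That term is bounded by \(\sqrt{k_0}\gamma_{n+k_0}\norm{\tilde W\tilde T^{-1}}_2\norm{\hW}_2\norm{x}_2\), with \(\norm{\tilde W\tilde T^{-1}}_2\le\eta\) by Lemma~\ref{lem:WT}. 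This is why \(\epsilon_1\) and \(\epsilon_2\) involve only \(\gamma_{k_0+1}\). Your instinct to keep \(\hT^{-1}\) attached to a factor \(\hW\) is right, but this asymmetric split is the concrete step that realizes it.
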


\begin{remark}
The assumption \(\delta_T\lVert T^{-1}\rVert_2\leq1/2\) in
Theorem~\ref{thm:orth} is plausible.
In fact, \(\delta_T\lVert T^{-1}\rVert_2\ll1\) automatically holds as long as
\(T\) is reasonably well-conditioned, as ensured by the second and third choices of
\(P\) in Section~\ref{subsec:choices}.
\end{remark}

\begin{remark}
When \(\kappa_2(T)\urf\ll1\), we have \(\eta=O(\sqrt{\kappa_2(T)}\,)\).
Thus
\begin{align*}
\lVert\hat{Q}\herm \hat{Q}-I\rVert_2&=O(\kappa_2(T)\urf),\\
\lVert V\herm \hat{Q}\rVert_2&=O(\sqrt{\kappa_2(T)}\,\urf),\\
\frac{\lVert A-\hat{Q}\hat{R}-V\hat{S}\rVert_2}{\lVert A\rVert_2}
&=O(\kappa_2(T)\urf).
\end{align*}
We conclude that the stability of Algorithm~\ref{alg:main} does not depend on
\(\kappa_2([V,A])\).
The second and third choices of \(P\) yield unconditionally stable algorithms.
Even if the first choice is not always stable because \(\kappa_2(T)\) can be
large, the source of instability is \emph{not} the numerical rank deficiency
of \([V,A]\).
\end{remark}

\section{Extension to non-standard inner products}
\label{sec:extensions}

In~\cite{Shao2023}, a Householder-QR algorithm in the context of a
non-standard inner product is proposed.
Our algorithm can be extended to the non-standard inner product setting based
on the left-looking variant in~\cite{Shao2023}.

Let \(B\in\mathbb{C}^{n\times n}\) be Hermitian and positive definite.
The \(B\)-inner product is defined as
\[
\langle x,y\rangle_B=y\herm Bx.
\]
We can obtain the following result similar to Theorem~\ref{thm:H}.

\begin{theorem}
\label{thm:bgh}
Let \(X\), \(Y\in\mathbb{C}^{n\times k}\) and \(W=X-Y\).
Suppose that \(\norm{X\herm BX-Y\herm BY}_2\le\delta\) and \(T=W\herm BX\) are
nonsingular.
Then the matrix \(H=I_n-WT^{-1}W\herm B\) satisfies
\begin{align*}
\norm{H\herm BH-B}_2\le\norm{BWT\iherm}_2^2\cdot\delta
\end{align*}
and \(HX=Y\).
If \(X\herm BX=Y\herm BY\), then \(H\herm BH=B\) and
\(H^{-1}=I_n-WT\iherm W\herm B\).
\end{theorem}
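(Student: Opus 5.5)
We mimic the proof of Theorem~\ref{thm:H}, with the Euclidean inner product replaced by the \(B\)-inner product. We first expand \(H\herm BH\) directly. Since \(B\) is Hermitian, \(H\herm=I_n-BWT\iherm W\herm\). Hence
\[
H\herm BH=B-BWT^{-1}W\herm B-BWT\iherm W\herm B+BWT\iherm W\herm BWT^{-1}W\herm B .
\]
Writing \(BWT^{-1}=BWT\iherm T\herm T^{-1}\) and \(BWT\iherm=BWT\iherm TT^{-1}\), this collapses to
\[
H\herm BH-B=BWT\iherm\bigl(W\herm BW-T-T\herm\bigr)T^{-1}W\herm B .
\]

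The key identity is \(W\herm BW-T-T\herm=-(X\herm BX-Y\herm BY)\). To check it, substitute \(T=W\herm BX\) and \(W=X-Y\), and use that \(B\) is Hermitian, so \(T\herm=X\herm BW\):
\[
W\herm BW-W\herm BX-X\herm BW=-W\herm BY-X\herm BW.
\]
Expanding the right-hand side gives
\[
-X\herm BY+Y\herm BY-X\herm BX+X\herm BY=Y\herm BY-X\herm BX .
\]
Since \((T^{-1}W\herm B)\herm=BWT\iherm\), the norm bound follows from submultiplicativity:
\[
\norm{H\herm BH-B}_2\le\norm{BWT\iherm}_2^2\,\delta .
\]

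For \(HX=Y\), we compute
\[
HX=X-WT^{-1}W\herm BX=X-WT^{-1}T=X-W=Y .
\]

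For the final claims, set \(\delta=0\) to obtain \(H\herm BH=B\). We then verify the inverse by multiplying \(G=I_n-WT\iherm W\herm B\) against \(H\):
\[
GH=I_n-WT\iherm W\herm B-WT^{-1}W\herm B+WT\iherm\bigl(W\herm BW\bigr)T^{-1}W\herm B .
\]
Substituting \(W\herm BW=T+T\herm\) (valid when \(\delta=0\)), the last term becomes \(WT\iherm W\herm B+WT^{-1}W\herm B\). It cancels the middle two terms, so \(GH=I_n\). Since the matrices are square, this gives \(H^{-1}=G\). Alternatively, \(H^{-1}=B^{-1}H\herm B\) from \(H\herm BH=B\), and
\[
B^{-1}H\herm B=B^{-1}\bigl(B-BWT\iherm W\herm B\bigr)=I_n-WT\iherm W\herm B,
\]
which gives the same formula more directly.

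The only real obstacle is the algebraic identity \(W\herm BW-T-T\herm=Y\herm BY-X\herm BX\). It requires \(B\) to be Hermitian, so that \(T\herm=X\herm BW\); everything else is direct expansion.
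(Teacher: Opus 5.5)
Your proof is correct and is essentially the paper's approach: the paper gives no separate proof, stating only that the result follows similarly to Theorem~\ref{thm:H}. Your argument is exactly the \(B\)-weighted transcription of that proof, via \(H\herm BH-B=BWT\iherm(W\herm BW-T-T\herm)T^{-1}W\herm B\) and the identity \(W\herm BW-T-T\herm=Y\herm BY-X\herm BX\). Your verification of \(H^{-1}=I_n-WT\iherm W\herm B\), either through \(GH=I_n\) or through \(H^{-1}=B^{-1}H\herm B\), is also correct and fills in a step the paper leaves implicit.
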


Now we can derive the two-stage Householder-QR under the \(B\)-inner product.
Similar to~\cite{Shao2023}, we first construct an initial \(B\)-orthonormal
basis \(U=[u_1,\dotsc,u_{k_0+k}]\in\mathbb{C}^{n\times(k_0+k)}\), which can be
obtained, e.g., via the Cholesky factorization of a principal submatrix of
\(B\).
Then we apply the techniques detailed in Section~\ref{subsec:choices} to
choose a unitary \(P\) for improving numerical stability (based on the matrix
\(V\herm BU_{:,1:k_0}\)).
The first \(k_0\) columns of \(U\) is replaced by~\(\tilde{U}=U_{:,1:k_0}P\).
Let \(X=[\tilde{u}_1,\dots,\tilde{u}_{k_0}]\), \(Y=V\) and then \(H^{-1}A\)
can be written as
\[
H^{-1}A=H^{-1}(VV\herm BA+V_\perp V_\perp\herm BA)=\tilde{U}V\herm BA+\tilde{U}_\perp V_\perp\herm BA.
\]
The next step is to remove components contributed by \(\tilde{u}_1\),
\(\dotsc\), \(\tilde{u}_{k_0}\).
This can be accomplished by a Gram--Schmidt orthogonalization process.
Then we only need to orthogonalize \(\tilde{U}_\perp V_\perp\herm BA\) using
\(u_{k_0+1}\), \(\dotsc\), \(u_{k_0+k}\) as the initial \(B\)-orthonormal
basis.
The final step is to transform \(\tilde{U}_\perp\) to~\(V_\perp\), which can
be accomplished via multiplication by \(H\).
Similar to~\eqref{eq:vavq}, we have
\begin{equation}
\label{eq:vavqb}
[V,A]=H\,[U_{1:k_0,:}P,\,\underline{Q}\,]
\cdot\begin{bmatrix} I_{k_0} & S \\ 0 & R \end{bmatrix}
=[V,Q]\begin{bmatrix} I_{k_0} & S \\ 0 & R \end{bmatrix}.
\end{equation}

We summarize the algorithm in Algorithm~\ref{alg:main-B}.
Notice that, to ensure orthogonality in finite precision arithmetic, a
reorthogonalization step of the Householder vector \(w_i\) is recommended
in~\cite{Shao2023}.
The situation is similar here---it is highly recommended to reorthogonalize
\(w_i\) against
\([\tilde{u}_1,\dotsc,\tilde{u}_{k_0},u_{k_0+1},\dotsc,u_{k_0+i-1}]\).

\begin{algorithm}[!tb]
\caption{Two-stage Householder-QR (\(B\)-inner product).}
\label{alg:main-B}
\begin{algorithmic}[1]
\REQUIRE A positive definite matrix \(B\in\mathbb C^{n\times n}\),
two matrices \(V\in\mathbb C^{n\times k_0}\) and
\(U\in\mathbb C^{n\times(k_0+k)}\) with \(B\)-orthonormal columns,
and another matrix \(A\in\mathbb C^{n\times k}\).
\ENSURE Matrices \(Q\in\mathbb C^{n\times k}\), \(R\in\mathbb C^{k\times k}\),
and \(S\in\mathbb{C}^{k_0\times k}\) that satisfy \eqref{eq:vavqb}
and \([V,Q]\herm B[V,Q]=I_{k_0+k}\).

\STATE Choose a unitary matrix \(P\in\mathbb C^{k_0\times k_0}\).
\STATE \(U_{:,1:k_0}\gets U_{:,1:k_0}P\).
\STATE \(W\gets U_{:,1:k_0}-V\).
\STATE \(T\gets I_{k_0}-V\herm BU_{:,1:k_0}\).
\STATE \(A\gets(I_n-WT\iherm W\herm B) A\).
\STATE \(S\gets U_{:,1:k_0}\herm BA\).
\STATE \(A\gets A-U_{:,1:k_0}S\).
\STATE Compute the QR factorization \(A=\underline{Q}R\) using Algorithm 1 or 2 in \cite{Shao2023}.
\STATE \(Q\gets(I_n-WT^{-1}W\herm B)\underline{Q}\).
\end{algorithmic}
\end{algorithm}

\section{Numerical experiments}
\label{sec:experiments}
In this section we test our algorithms with several examples.
All experiments are performed under IEEE double precision arithmetic with
\(\urf=2^{-53}\approx1.1\times10^{-16}\) on a Linux server equipped with
two sixteen-core Intel Xeon Gold 6226R 2.90~GHz CPUs with 1024~GB of main
memory.

\subsection{Stability tests}
The computations are carried out using MATLAB 2023b in this subsection.
Random matrices are generated by \texttt{randn}.
We use the MATLAB function \texttt{qr} for Householder-QR.
In Algorithm~\ref{alg:main}, we use the three choices of \(P\) discussed in
Section~\ref{subsec:choices}, where a variant of QR
factorization~\cite{DHHR2009} is adopted to produce \(R\) with nonnegative
diagonal entries.
Additionally, we use SVD to compute the polar decomposition.

\begin{example}
In~\eqref{eq:badbcg}, we presented \(V\) and \(A\) for which the BCGS
framework struggles to compute~\(Q\).
Our algorithm with all three choices successfully yields \(\hat{Q}\) which
satisfies \(\bigl\lVert[V,\hat{Q}]\herm[V,\hat{Q}]-I_4\bigr\rVert_2
\approx2\urf\).
\end{example}

Algorithm~\ref{alg:main} can be extended to orthogonalize multiple blocks like
the algorithms in~\cite{CLRT2022}.
We present the complete procedure in Algorithm~\ref{alg:bh}.
This algorithm targets applications where matrices \(A_i\) are produced
sequentially, which represents the standard mode of operation in block Krylov
subspace methods.
Next, we compare Algorithm~\ref{alg:bh} with BCGS algorithms to demonstrate
the numerical stability advantages of our approach.

\begin{algorithm}[!tb]
\caption{Block Householder-QR.}
\label{alg:bh}
\begin{algorithmic}[1]
\REQUIRE \(A=[A_1,\dotsc,A_p]\in\mathbb{C}^{n\times k}\), \(A_i\in\mathbb{C}^{n\times k_i}\), \(i=1\), $\dots$, \(p\).
\ENSURE Matrices \(Q\in\mathbb{C}^{n\times k}\), \(R\in\mathbb{C}^{k\times k}\)
that satisfy \(Q\herm Q=I_k\) and \(A=QR\).
\STATE Compute the QR factorization \(A_1=Q_1R_1\).
\FOR{\(i=2\) \textbf{to} \(p\)}
\STATE Construct generalized Householder matrix \(H\) based on \([Q_1,\dotsc,Q_{i-1}]\).
\STATE \(X=A_i\).
\STATE \(X=H\herm X\).
\STATE Compute the QR factorization \(X_{(k_1+\cdots+k_{i-1}+1):n,:}=\underline{Q}_iR_i\).
\STATE \(Q_i=H\begin{bmatrix}0\\~\underline{Q}_i~\end{bmatrix}\).
\ENDFOR
\end{algorithmic}
\end{algorithm}

\begin{example}
\label{ex:concrete}
We test the stability of Algorithm~\ref{alg:bh} in multiple blocks
on two ill-conditioned matrices---\texttt{s-step} and \texttt{stewart\_extreme}
from~\cite{CLRT2022}.
The matrix \(A\in\mathbb{R}^{n\times pk}\), where \(n=10000\)
(number of rows), \(p=50\) (number of blocks), and \(k=10\) (columns per blocks).
The comparison with BCGS2 is shown in Table~\ref{tab:orbh},
where the labels `House' and `Chol', respectively, indicate that
Householder-QR and the shifted Cholesky-QR algorithm~\cite{FKNYY2020} are
employed for intra-block orthogonalization.
Our algorithms maintain orthogonality to machine precision, whereas BCGS2 may
completely loses orthogonality.

\begin{table}[!tb]
\centering
\caption{Losses of orthogonality and relative residuals for different
	algorithms on Example~\ref{ex:concrete}, where ``--'' indicates that the
	shifted Cholesky-QR algorithm fails at some step of the computation.}
\begin{tabular}{ccccc}
	\hline
	\multirow{2}{*}{Method}
	&\multicolumn{2}{c}{\texttt{s-step}}&
	\multicolumn{2}{c}{\texttt{stewart\_extreme}}\\
	&\(\norm{\hat{Q}\herm \hat{Q}-I}_2\)&\(\norm{A-\hat{Q}\hat{R}}_2/\norm{A}_2\)
	&\(\norm{\hat{Q}\herm \hat{Q}-I}_2\)&\(\norm{A-\hat{Q}\hat{R}}_2/\norm{A}_2\)\\
	\hline
	Alg.~\ref{alg:bh} (1st choice)&\(\hphantom{i}7.37\times 10^{-15}\)&\(2.10\times 10^{-15}\)&\(\hphantom{i}1.28\times 10^{-15}\)&\(7.74\times 10^{-16}\)\\
	\hspace{3.8pt}Alg.~\ref{alg:bh} (2nd choice)&\(\hphantom{i}1.02\times 10^{-14}\)&\(2.27\times 10^{-15}\)&\(\hphantom{i}1.13\times 10^{-15}\)&\(6.53\times 10^{-16}\)\\
	\hspace{2pt}Alg.~\ref{alg:bh} (3rd choice)&\(\hphantom{i}1.42\times 10^{-14}\)&\(2.61\times 10^{-15}\)&\(\hphantom{i}1.98\times 10^{-15}\)&\(1.35\times 10^{-15}\)\\
	BCGS2 (House)&\(4.20\times 10^1\hphantom{0}\)&\(7.62\times 10^{-15}\)&\(2.86\times 10^0\hphantom{0}\)&\(1.84\times 10^{-16}\)\\
	BCGS2 (Chol)&--&--&\(\hphantom{i}1.54\times 10^{-14}\)&\(1.33\times 10^{-16}\)\\
	\hline
\end{tabular}
\label{tab:orbh}
\end{table}
\end{example}

\begin{example}
\label{ex:concrete-B}
We also test the stability of Algorithm~\ref{alg:bh} with a non-standard inner
product.
We use the same test matrices as in Example~\ref{ex:concrete}, and generate a
reasonably well-conditioned Hermitian positive definite matrix \(B\) with
\(\kappa_2(B)=10^5\).
We compare them with BCGS2 and the results are shown
in Table~\ref{tab:orbhb}.
Similarly, our algorithms achieve far better orthogonality than BCGS2.

\begin{table}[!tb]
\centering
\caption{Losses of orthogonality and relative residuals for different
	algorithms with a non-standard inner product on Example~\ref{ex:concrete-B},
	where ``--'' indicates that the shifted Cholesky-QR algorithm fails at some
	step of the computation.}
\begin{tabular}{ccccc}
	\hline
	\multirow{2}{*}{Method}
	&\multicolumn{2}{c}{\texttt{s-step}}&
	\multicolumn{2}{c}{\texttt{stewart\_extreme}}\\
	&\(\norm{\hat{Q}\herm B\hat{Q}-I}_2\)&\(\norm{A-\hat{Q}\hat{R}}_2/\norm{A}_2\)
	&\(\norm{\hat{Q}\herm B\hat{Q}-I}_2\)&\(\norm{A-\hat{Q}\hat{R}}_2/\norm{A}_2\)\\
	\hline
	Alg.~\ref{alg:bh} (1st choice)&\(2.74\times 10^{-14}\)&\(1.04\times 10^{-14}\)&\(\hphantom{i}2.18\times 10^{-14}\)&\(7.99\times 10^{-15}\)\\
	\hspace{3.8pt}Alg.~\ref{alg:bh} (2nd choice)&\(2.77\times 10^{-14}\)&\(9.88\times 10^{-15}\)&\(\hphantom{i}1.80\times 10^{-14}\)&\(5.78\times 10^{-15}\)\\
	\hspace{2pt}Alg.~\ref{alg:bh} (3rd choice)&\(1.31\times 10^{-13}\)&\(5.22\times 10^{-14}\)&\(\hphantom{i}5.09\times 10^{-14}\)&\(1.76\times 10^{-14}\)\\
	BCGS2 (House)&\(4.40\times 10^1\hphantom{00}\)&\(4.90\times 10^{-13}\)&\(2.00\times 10^1\hphantom{0}\)&\(9.75\times 10^{-14}\)\\
	BCGS2 (Chol)&--&--&--&--\\
	\hline
\end{tabular}
\label{tab:orbhb}
\end{table}
\end{example}

In the following examples we illustrate that an ill-conditioned \(T\) in the
generalized Householder transformation can cause numerical instability.

\begin{example}
\label{ex:badmlu}
We follow the method in Appendix~\ref{sec:LU} to generate a matrix
\(V\in\mathbb R^{1000\times100}\) with orthonormal columns.
Applying the modified LU factorization on \(V_{1:100,:}\) yields the
ill-conditioned~\(U\) in~\eqref{eq:badmlu}.
The matrix \(A\in\mathbb{R}^{1000\times100}\) is generated randomly.
We apply Algorithm~\ref{alg:main} to \(V\) and \(A\) with different choices of
\(P\).
The results are collected in Table~\ref{tab:badmlu}.
The algorithm with the first choice loses accuracy due to the ill-conditioning
of \(U\), while the others are numerically stable.

\begin{table}[!tb]
\centering
\caption{Losses of orthogonality and relative residuals for
	Algorithm~\ref{alg:main} on Example~\ref{ex:badmlu}.}
\begin{tabular}{cccc}
	\hline
	Method&\(\norm{V\herm \hat{Q}}_2\)&\(\norm{\hat{Q}\herm \hat{Q}-I}_2\)&\(\norm{A-V\hat{S}-\hat{Q}\hat{R}}_2/\norm{A}_2\)\\
	\hline
	Alg.~\ref{alg:main} (1st choice)&\(8.29\times 10^{-11}\)&\(3.51\times 10^{-6\hphantom{1}}\)&\(6.45\times 10^{-6\hphantom{1}}\)\\
	\hspace{3.8pt}Alg.~\ref{alg:main} (2nd choice)&\(6.12\times 10^{-16}\)&\(1.21\times 10^{-15}\)&\(1.93\times 10^{-15}\)\\
	\hspace{2pt}Alg.~\ref{alg:main} (3rd choice)&\(5.68\times 10^{-16}\)&\(1.42\times 10^{-15}\)&\(1.94\times 10^{-15}\)\\
	\hline
\end{tabular}
\label{tab:badmlu}
\end{table}
\end{example}

\begin{example}
We generate various \(V\in\mathbb{R}^{1000\times 100}\) with orthonormal
columns and \(P\in\mathbb{R}^{100\times 100}\).
Instead of choosing \(P\) according to the three choices in
Section~\ref{subsec:choices}, in this example each pair of \((V,P)\) is
carefully constructed so that the matrix \(T\) in the generalized Householder
transformation has a prescribed condition number.
The matrix \(A\in\mathbb{R}^{1000\times100}\) is generated randomly.
The results of Algorithm~\ref{alg:main} are displayed in Figure~\ref{fig:cond}.
The growth of both losses of orthogonality and relative residuals matches the
predictions of our rounding error analysis in Section~\ref{sec:rounding}.

\begin{figure}[!tb]
\centering
\includegraphics[scale=0.5]{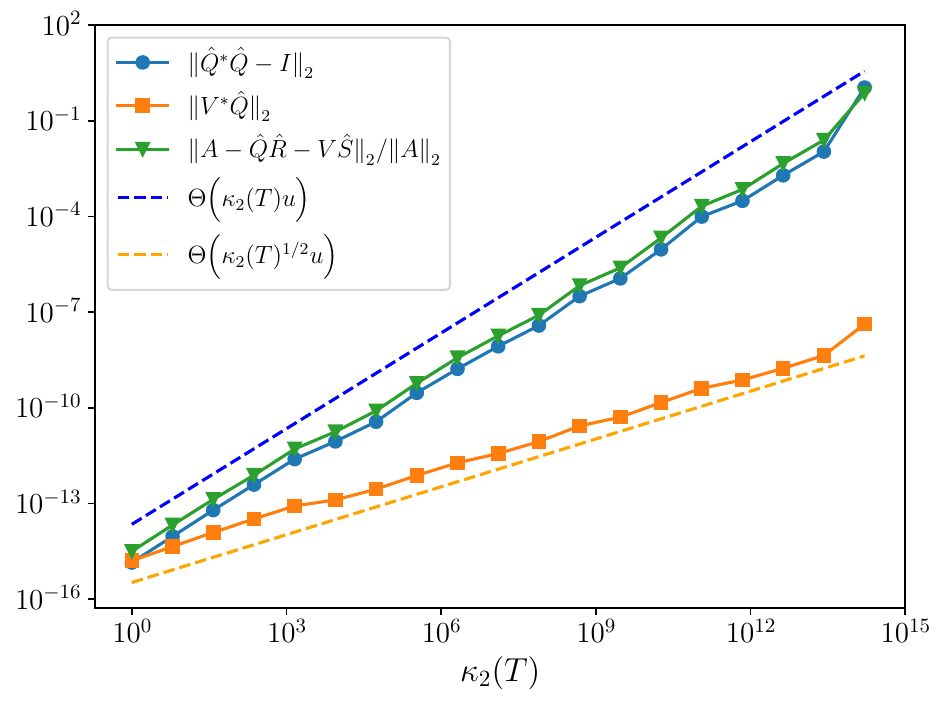}
\caption{Losses of orthogonality and relative residuals with different
	\(\kappa_2(T)\)'s.}
\label{fig:cond}
\end{figure}
\end{example}

\subsection{Performance tests}
We also implement our algorithms in Fortran~90 in order to test the actual
performance.
The code is compiled using the GNU Fortran compiler version 11.4.0 with
optimization flag \texttt{-O3}, and linked with OpenBLAS 0.3.26 and LAPACK
3.11.0.
The subroutine \texttt{xLA\{OR,UN\}HR\_COL\_GETRFNP} is
employed to perform the modified LU factorization.
Meanwhile, \texttt{xGEQRF} is used to conduct Householder-QR, and
\texttt{xGEQRFP} is utilized for the QR factorization, where the diagonal
elements of the matrix \(R\) are nonnegative.
We use \texttt{xGESDD} to compute the polar decomposition, although more
efficient approximation-based methods exist~\cite{NF2016}.

We generate \(V\in\mathbb{C}^{10000\times 100}\) and
\(A\in\mathbb{C}^{10000\times k}\) with prescribed condition number
\(\kappa_2(A)=10^{12}\) using the subroutine \texttt{xLAGGE}.
Excution times of Algorithm~\ref{alg:main} relative to the naive
Householder-QR algorithm are shown in Figure~\ref{fig:time}.
BCGS is excluded from the comparison, since the computed
\(\lVert V\herm \hat{Q}\rVert_2\) is far above \(\urf\),
suggesting numerical instability.
Figure~\ref{fig:time} indicates that Algorithm~\ref{alg:main} outperforms
naive Householder-QR and BCGS2 in computational efficiency,
particularly for the first two choices of \(P\).
Since Table~\ref{tab:badmlu} already reveals that the first choice can suffer
from numerical instability, in practice we recommend adopting the second
choice which is based on the QR factorization to generate \(P\) in
Algorithm~\ref{alg:main}.

\begin{figure}[!tb]
\centering
\includegraphics[scale=0.5]{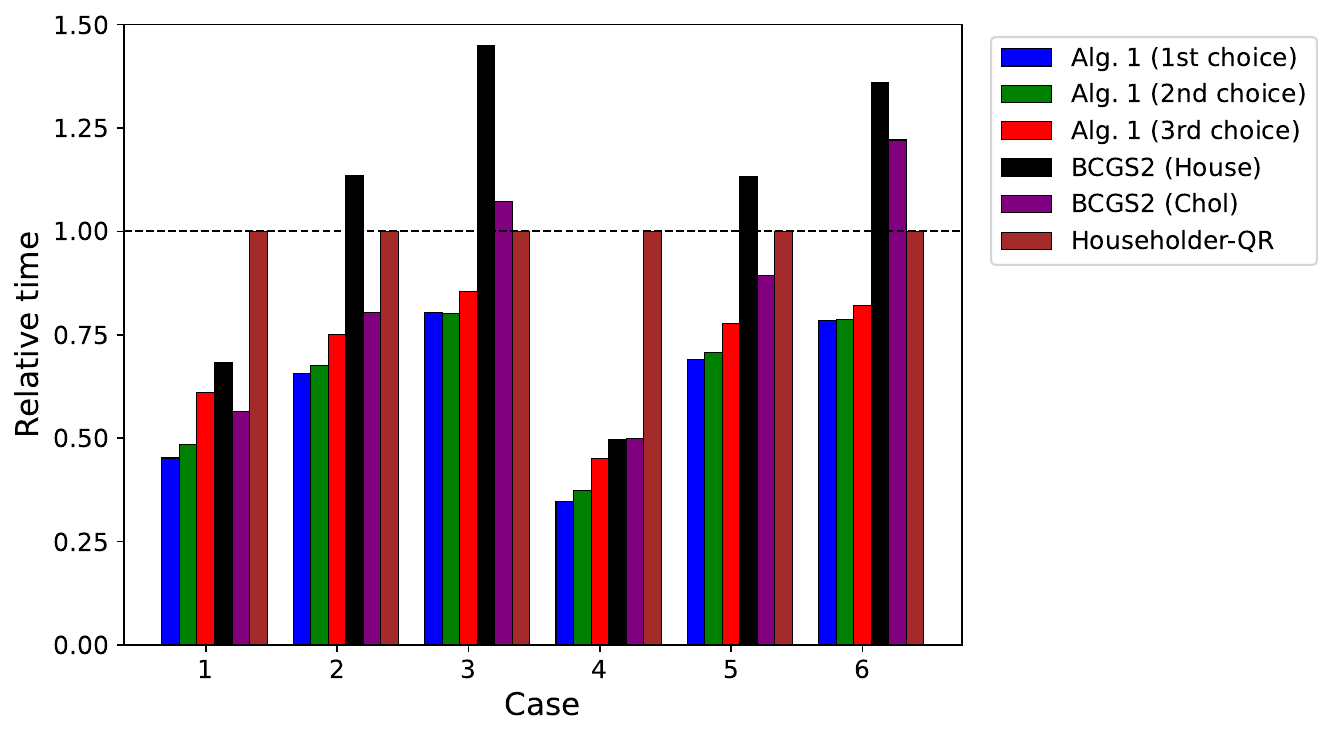}
\caption{Execution time relative to the Householder-QR performed on \([V,A]\).
Cases~1--3 correspond to real arithmetic, and Cases~4--6 to complex arithmetic.
In both settings \(k_0=100\) and \(k=50\), \(100\), \(200\).}
\label{fig:time}
\end{figure}

\section{Conclusion}
As a commonly used approach, BCGS may encounter instability when handling
two-stage orthogonalization.
To address this, we propose an unconditionally stable two-stage Householder
orthogonalization algorithm.
The key ingredient of our algorithm is to use the generalized Householder
transformation, which has been studied by~\cite{BJ1974,Kaufman1987,SP1988}.
We further discuss three variants (corresponding to three choices of the
matrix \(P\) as in Section~\ref{subsec:choices}) to enhance numerical
stability.
The second choice is recommended in practice as it is both cheap and
numerically stable.
We conduct a rounding error analysis to confirm the unconditional stability of
our algorithm.
Numerical experiments demonstrate that our algorithm is both numerically
stable and computationally efficient.

\section*{Acknowledgments}

We thank Laura Grigori, Daniel Kressner, and Yuxin Ma for helpful discussions.

\appendix
\addcontentsline{toc}{section}{Appendix}
\section*{Appendix}
\section{The matrix \(U\) in modified LU}
\label{sec:LU}

We begin with the following lemma.
\begin{lemma}
\label{lem:U}
Given a matrix \(V\in\mathbb{R}^{n\times k}\) with \(V\herm V=I_k\),
and a vector \(b\in\mathbb{R}^{k+1}\) with \(\lVert b\rVert_2<1\).
There exists \(\tilde{V}\in\mathbb{R}^{(n+1)\times (k+1)}\) with orthonormal columns such that
\[
e_1\herm\tilde{V}=b\herm,\quad
H\tilde{V}=\tilde{V}_H=\begin{bmatrix}
-\sign(b_1)&0\\
0&V
\end{bmatrix},
\]
where \(H\) is a Householder matrix
generated by the first step of Householder-QR applying to \(\tilde{V}\).
\end{lemma}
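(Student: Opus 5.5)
The plan is to build \(\tilde V\) backwards from \(\tilde V_H\). A Householder matrix is real symmetric and orthogonal, so \(H=H^{-1}=H\herm\). I will therefore choose the first column \(x\in\mathbb{R}^{n+1}\) of \(\tilde V\) first and set \(\tilde V=H\tilde V_H\), where \(H\) is the Householder matrix that the first step of Householder-QR builds from \(x\). The remaining work is to pick \(x\) so that all required properties hold.

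\emph{Step 1 (the Householder matrix generated by \(x\)).} For a unit vector \(x\), the first step of Householder-QR uses \(w=x+\sign(x_1)e_1\) and \(H=I-2ww\herm/(w\herm w)\), so that \(Hx=-\sign(x_1)e_1\). Applying \(H\) to both sides gives \(H e_1=-\sign(x_1)x\). Taking transposes and using \(H=H\herm\) then gives \(e_1\herm H=-\sign(x_1)x\herm\).

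\emph{Step 2 (choice of \(x\)).} Set \(x_1=b_1\), so that \(\sign(x_1)=\sign(b_1)\). Partition \(b=[b_1,\beta\herm]\herm\) with \(\beta\in\mathbb{R}^{k}\), and set
\[
x_{2:n+1}=-\sign(b_1)\bigl(V\beta+\sqrt{1-\lVert b\rVert_2^2}\,v\bigr),
\]
where \(v\) is a unit vector with \(V\herm v=0\). Since \(v\perp\range(V)\) and \(V\herm V=I_k\),
\[
\lVert x\rVert_2^2=b_1^2+\lVert\beta\rVert_2^2+1-\lVert b\rVert_2^2=1 .
\]

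\emph{Step 3 (verification).} Define \(\tilde V=H\tilde V_H\). It has orthonormal columns, because \(H\) is orthogonal and \(\tilde V_H\) has orthonormal columns. Its first column is \(H(-\sign(b_1)e_1)=x\). Hence the Householder matrix produced by the first step of Householder-QR on \(\tilde V\) is exactly the \(H\) of Step 1. Consequently \(H\tilde V=H^2\tilde V_H=\tilde V_H\). For the first row, Step 1 gives
\[
e_1\herm\tilde V=-\sign(b_1)\,x\herm\tilde V_H
=\bigl[\,x_1,\;-\sign(b_1)\,x_{2:n+1}\herm V\,\bigr]
=\bigl[\,b_1,\;\beta\herm\bigr]=b\herm ,
\]
using \(\sign(b_1)^2=1\), \(V\herm V=I_k\) and \(V\herm v=0\).

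\emph{Main obstacle.} The only delicate point is the existence of the unit vector \(v\perp\range(V)\), which requires \(n>k\). If \(n=k\), the unit norm of \(x\) forces \(\lVert b\rVert_2=1\), which the hypothesis excludes. So I will note that the lemma implicitly needs \(n>k\); this holds in the intended use, where the construction is iterated with \(n\) much larger than \(k\). Two smaller points also need care: the convention \(\sign(0)=1\) keeps \(\sign(x_1)=\sign(b_1)\) consistent, and the strict inequality \(\lVert b\rVert_2<1\) only serves to make the square root real. Beyond that, the argument consists of the short identities above.
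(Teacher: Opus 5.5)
Your proof is correct and is essentially the paper's construction: the paper takes the unit vector \(x=\sqrt{1-\lVert b\rVert_2^2}\,v+\tilde V_H b\) with \(v\perp\range(\tilde V_H)\), builds the reflector satisfying \(Hx=e_1\), and sets \(\tilde V=H\tilde V_H\). The paper's \(x\) is thus \(He_1\), which equals \(-\sign(b_1)\) times your first column, so you obtain the same \(H\) and the same \(\tilde V\). Your observation that \(n>k\) is needed for \(v\) to exist is also implicitly required by the paper, which simply asserts that such a \(v\in\mathbb{R}^{n+1}\) can be found.
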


\begin{proof}
We can find a unit vector \(v\in\mathbb{R}^{n+1}\)
such that \(\tilde{V}_H\herm v=0\).
Let
\[
x=\sqrt{1-\lVert b\rVert_2^2}\,v+\tilde{V}_Hb,
\]
and then \(x_1=-\sign(b_1)b_1\) (because \(v_1=0\)).
Let \(\tau=1-x_1\),
\[
w=\frac{x-e_1}{x_1-1},\quad H=I-\tau ww\herm,
\]
and thus \(Hx=e_1\).
Let \(\tilde{V}=H\tilde{V}_H\) and
we can verify that \(\tilde{V}\) satisfies
\[
e_1\herm\tilde{V}= e_1\herm H \tilde{V}_H=x\herm\tilde{V}_H = b\herm.
\]
Note that the first column of \(\tilde{V}\) is
\(-\sign(b_1)He_1=-\sign(b_1)x\) and \(\tilde{V}_{1,1}=b_1\).
If we apply Householder-QR to \(\tilde{V}\),
the first Householder matrix is \(H\).
\end{proof}

Building upon Lemma~\ref{lem:U},
we can prove the following theorem,
thereby verifying that \eqref{eq:badmlu} is a feasible output
in the modified LU factorization.
\begin{theorem}
Suppose \(R\in\mathbb{R}^{k\times k}\) is upper triangular
and the \(2\)-norm of each row of \(R\) is less than one.
Let \(D\in\mathbb{R}^{k\times k}\) be a diagonal matrix that satisfies \(d_{i,i}=\sign(R_{i,i})\) for \(i=1\), $\dotsc$,~\(k\),
Then there exists a matrix \(Z\in\mathbb{R}^{k\times k}\), \(\lVert Z\rVert_2\le 1\) for which the modified LU factorization yields \(U=D+R\).
\end{theorem}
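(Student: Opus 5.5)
The plan is to realize the prescribed $U=D+R$ as the output of Algorithm~\ref{alg:mlu} applied to $Z=\tilde V_{1:k,:}$, where $\tilde V$ is an orthonormal matrix built row by row from the bottom of $R$ upward using Lemma~\ref{lem:U}. Since any submatrix of a matrix with orthonormal columns has $2$-norm at most one, $\lVert Z\rVert_2\le1$ comes for free. The whole argument rests on one observation: a single step of the modified LU factorization on the top block of $\tilde V$ reproduces the first row of $U$, and leaves as trailing Schur complement the top block of the trailing orthonormal factor $V$ in Lemma~\ref{lem:U}.

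\textbf{Step 1: what row $i$ of $U$ requires.} At step $i$, with current trailing matrix $Z$, Algorithm~\ref{alg:mlu} sets
\[
U_{i,i}=-\sign(Z_{i,i})\bigl(1+\lvert Z_{i,i}\rvert\bigr),
\qquad
U_{i,i+1:k}=-Z_{i,i+1:k}.
\]
So to obtain $U_{i,i:k}=d_{i,i}e_1\herm+R_{i,i:k}$, the first row of the current trailing block must be $b\herm=-R_{i,i:k}$. Indeed, then $-\sign(b_1)=\sign(R_{i,i})=d_{i,i}$ and $1+\lvert b_1\rvert=1+\lvert R_{i,i}\rvert$. One caveat: with the paper's convention $\sign(0)=1$, the sign when $R_{i,i}=0$ needs a quick check, since $\sign(-0)=\sign(0)$. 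By hypothesis $\lVert b\rVert_2<1$.

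\textbf{Step 2: backward induction on $i=k,k-1,\dots,1$.}
\begin{itemize}
\item Base case: take $V^{(k)}$ to be any unit vector in $\mathbb{R}^{m}$, with $m$ large, whose first entry is $-R_{k,k}$.
\item Inductive step: given $V^{(i+1)}$ with orthonormal columns, apply Lemma~\ref{lem:U} with $b=-R_{i,i:k}\herm$ and $V=V^{(i+1)}$. This yields $V^{(i)}$ with orthonormal columns, first row $b\herm$, and $HV^{(i)}=\mathrm{diag}(s,V^{(i+1)})$, where $s=-\sign(b_1)$.
\end{itemize}
Finally set $Z=V^{(1)}_{1:k,:}$.

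\textbf{Step 3: the key one-step identity (the main obstacle).} Write the reflector as $H=I-\tau ww\herm$ with $w_1=1$, and let $y=w_{2:k}$. Denote by $V_t$ the top $(k-i)$ rows of $V^{(i+1)}$. From $\tilde V=H\,\mathrm{diag}(s,V^{(i+1)})$, the first row gives
\[
\tau\, w\herm\,\mathrm{diag}(s,V^{(i+1)})=[s,0]-b\herm .
\]
Substituting this back into rows $2$ to $k$ of $\tilde V$ gives
\[
Z_{2:,:}=[0,V_t]-y\bigl(se_1\herm-b\herm\bigr).
\]
Now run one step of the algorithm on $Z$:
\begin{itemize}
\item The pivot sign is $P_{1,1}=s$, so $U_{1,1}=s-b_1$.
\item The first column below the pivot is $Z_{2:,1}=-y(s-b_1)$, hence $L_{2:,1}=-Z_{2:,1}/U_{1,1}=y$.
\item The update is $Z_{2:,2:}+L_{2:,1}U_{1,2:}=(V_t+yb_{2:}\herm)-yb_{2:}\herm=V_t$.
\end{itemize}
Thus the trailing matrix after step one is exactly the top block of $V^{(i+1)}$. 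By induction, the remaining steps produce rows $i+1,\dots,k$ of $D+R$.

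Step 3 is the delicate part. It requires keeping track of the signs and of the normalization of $w$ used in Lemma~\ref{lem:U}, namely $w=(x-e_1)/(x_1-1)$ and $\tau=1-x_1$, and checking that the sign chosen there agrees with the one in Algorithm~\ref{alg:mlu}. It also requires confirming that no pivot vanishes; this holds because $\lvert U_{i,i}\rvert\ge1$. Dimensions pose no difficulty: each application of the lemma adds one row and one column, so choosing the ambient $n$ large enough at the base suffices.
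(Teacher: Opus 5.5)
Your proof is correct and is essentially the paper's argument: build the orthonormal matrix from the bottom row of $R$ upward with Lemma~\ref{lem:U} and take $Z=\tilde V_{1:k,:}$. You differ in two ways: you verify the one-step modified-LU/Householder correspondence by hand rather than citing \cite[Lemma~6.2]{BDGJKN2015}, and your sign $b=-R_{i,i:k}\herm$ is the one that actually gives $U=D+R$, whereas the paper's $b^{(i)}=[R_{k-i,k-i},\dotsc,R_{k-i,k}]\herm$ literally yields $U=-(D+R)$ (harmless after replacing $R$ by $-R$). The caveat you flag at $R_{i,i}=0$ is a defect of the statement, not of your proof: Algorithm~\ref{alg:mlu} gives either $U_{i,i}\le-1$ or $U_{i,i}>1$, so $U_{i,i}=1=\sign(0)+0$ is unattainable, and the claim needs $R_{i,i}\neq0$, which is exactly the case your argument covers.
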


\begin{proof}
In modified LU, each row of \(U\) can be obtained from the first row of the Schur complement after each elimination step.
Suppose \(V\in\mathbb{R}^{n\times k}\) is a matrix with orthonormal columns.
It is shown in~\cite[Lemma 6.2]{BDGJKN2015} that the Schur complement update from modified LU applied to \(V\) (the whole matrix,
rather than \(V_{1:k_0,:}\)) matches the trailing matrix
update from Householder-QR applied to~\(V\) in every step.
Thus we can construct \(V\) by reversing the Householder-QR process
through~\(U\), or equivalently, through the matrix \(R\).
After~\(V\) is obtained, its first \(k\) rows \(V_{1:k,:}\) constitute the
desired matrix~\(Z\).

In the first step, we can find a vector \(x\in\mathbb{R}^{n-k}\)
such that \(V_1=[R_{k,k},x\herm]\herm\) is a unit vector.
The matrix \(V_1\) can be viewed as the trailing submatrix after applying
\(k-1\) Householder transformations.
Let~\(b^{(1)}=[R_{k-1,k-1},R_{k-1,k}]\herm\) and by Lemma~\ref{lem:U}
we can find a \(V_2\in\mathbb{R}^{(n-k+2)\times 2}\) with orthonormal columns.
In the \(i\)th step, we apply Lemma~\ref{lem:U} to \(V_i\) and
\(b^{(i)}=[R_{k-i,k-i},\dotsc,R_{k-i,k}]\herm\).
By induction after \(k-1\) steps we obtain \(V=V_k\) that satisfies the conditions.
\end{proof}

\section{Validity of assumptions in Section~\ref{sec:rounding}}
\label{sec:assump}

\subsection{Assumption~\ref{assump:qr}}
This assumption is automatically satisfied provided that
Householder-QR is used; see \cite[Chapter~19]{Higham2002}.

\subsection{Assumption~\ref{assump:T}}
If we use \(I_{k_0}-V_{1:k_0,:}\herm P\) to compute \(\hT\),
then rounding errors arise from matrix multiplication and addition.
As long as \(P\) is numerically unitary, i.e.,
\(\lVert P\herm P-I_{k_0}\rVert_2=O(\urf)\),
we have \(\lVert\hT-T\rVert_2=O(\urf)\) as well.
Next we examine the scenario in which \(\hT\) is obtained using the methods
described in Section~\ref{subsec:choices}.
For simplicity, we assume \(V\herm V=I_{k_0}\).

\paragraph{First choice}
If we use the modified LU factorization to construct \(P\),
we have \(\hT\) in factorized form~\((\hat{L}\hat{U})\herm P\),
where \(\lvert\hat{L}\hat{U}-P+V_{1:k_0,:}\rvert
\le\epsilon_{\mathrm{LU}}(k_0)
\lvert\hat{L}\rvert\lvert\hat{U}\rvert\).
The absolute value and the inequality of matrices
are understood componentwise and \(\epsilon_{\mathrm{LU}}(k_0)=O(k_0\urf)\)~\cite[Theorem 9.3]{Higham2002}.
Furthermore, by~\cite[Lemma 6.2]{BDGJKN2015},
we can verify \(\lVert L\rVert_1\le\sqrt{k_0}+1\),
\(\lVert U\rVert_\infty\le \sqrt{k_0}+1\). Then
\begin{align*}
\lVert\hT-T\rVert_2
&=\lVert(\hat{L}\hat{U})\herm P-(P-V_{1:k_0,:})\herm P\rVert_2\\
&\le\lVert\,\lvert \hat{L}\rvert\,\rVert_2\lVert\,\lvert \hat{U}\rvert\,
\rVert_2\epsilon_{\mathrm{LU}}(k_0)\\
&\lesssim k_0(\sqrt{k_0}+1)^2\epsilon_{\mathrm{LU}}(k_0).
\end{align*}
We use \(\lesssim\) here because we ignore the small error between \(L\), \(U\)
and \(\hat{L}\), \(\hat{U}\).

\paragraph{Second choice}
If we use the QR factorization, we have
\[
\lVert P\herm P-I_{k_0}\rVert_2\le\epsilon_{\mathrm{orth}}(k_0,k_0), \qquad
\lVert V_{1:k_0,:}+PR_1\rVert_2\le\epsilon_{\mathrm{res}}(k_0,k_0)
\]
by Assumption~\ref{assump:qr} in Section~\ref{sec:rounding}.
Then
\[
\hT=\fl(I_{k_0}+R_1\herm)=I_{k_0}+R_1\herm+\Delta T_1,
\qquad \abs{\Delta T_1}\le\urf\abs{I_{k_0}+R_1\herm}.
\]
Thus
\begin{align*}
\lVert\hT-T\rVert_2&=\lVert\Delta T_1+(V_{1:k_0,:}+PR_1)\herm P
+(R_1\herm+I_{k_0})(I_{k_0}-P\herm P)\rVert_2\\
&\le\lVert\Delta T_1\rVert_2+\norm{P}_2\norm{V_{1:k_0,:}+PR_1}_2
+(\lVert R_1\rVert_2+1)\lVert P\herm P-I_{k_0}\rVert_2\\
&\le(\sqrt{2}+1)\sqrt{k_0}\urf+\sqrt{2}\epsilon_{\mathrm{res}}(k_0,k_0)
+(\sqrt{2}+1)\epsilon_{\mathrm{orth}}(k_0,k_0).
\end{align*}

\paragraph{Third choice}
The analysis follows analogously to the second choice,
assuming the polar decomposition is computed
via a backward stable algorithm
such as the Golub--Kahan--Reinsch SVD~\cite[Section~5.4.1]{GV2013}.
Therefore we omit the proof.

\subsection{Assumption~\ref{assump:sv}}
The assumption holds when \(A\) is triangular or positive definite,
with \(\epsilon_{\mathrm{sv}}(n)=\sqrt{n}\,\gamma_n\)
or \(\gamma_{3n+1}n/(1-n\gamma_{n+1})\), respectively;
see~\cite[Chapters~8 and~10]{Higham2002}.
For a general dense matrix \(A\), if we solve \(Ax=b\) by LU factorization,
the backward error \(\Delta A\) is bounded in terms of the growth factor in
the factorization.
Fortunately, for modified LU, we have \(\lVert L\rVert_1\le\sqrt{n}+1\),
\(\lVert U\rVert_\infty\le \sqrt{n}+1\),
thus \(\epsilon_{\mathrm{sv}}(n)\lesssim n(\sqrt{n}+1)^2
(\epsilon_{\mathrm{LU}}(n)+\gamma_{2n})\);
see~\cite[Chapter~9]{Higham2002}.
Under our setting, we actually have
\(\epsilon_{\mathrm{sv}}(n)\lesssim n(\sqrt{n}+1)^2\gamma_{2n}\) since the
term \(\epsilon_{\mathrm{LU}}\) has already been absorbed in \(\epsilon_T\).
Therefore the choices in Section~\ref{subsec:choices} all yield small \(\epsilon_{\mathrm{sv}}(n)\).

\section{Proof of Theorem~\ref{thm:orth}}
\label{sec:proof}

We first establish the following lemma.

\begin{lemma}
\label{lem:Hx}
Under the assumption of Theorem~\ref{thm:orth},
assume \(x\in\mathbb{C}^{n}\).
There exists a matrix \(\tilde{W}\) which satisfies \(\lvert\tilde{W}-\hW\rvert\le\gamma_{k_0}\lvert\hW\rvert\)
and a matrix \(\tilde T\) which satisfies \(\lVert\tilde{T}-\hT\rVert_2\le\epsilon_{\mathrm{sv}}(k_0)\lVert\hT\rVert_2\) such that
\begin{align*}
\fl\big(x-\hW\hT^{-1}\hW\herm x\big)=\tilde{H} x+\Delta x_1,
\quad \tilde{H}=I_n-\tilde{W}\tilde{T}^{-1}\tilde{W}\herm,
\end{align*}
where \(\lVert\Delta x_1\rVert_2/\lVert x\rVert_2\le\epsilon_3\).
We also have
\[
\fl\big(x-\hW\hT^{-1}\hW\herm x\big)=H x+\Delta x_2,
\]
where \(\lVert\Delta x_2\rVert_2/\lVert x\rVert_2\le\epsilon_4\).
\end{lemma}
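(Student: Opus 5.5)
I will follow the computation of \(y=\fl(x-\hW\hT^{-1}\hW\herm x)\) in three stages: \(z=\fl(\hW\herm x)\), then \(u=\fl(\hT^{-1}z)\) by solving \(\hT u=z\), then \(y=\fl(x-\hW u)\). For the first stage, the standard inner-product error analysis gives \(z=\tilde W\herm x\) with \(\lvert\tilde W-\hW\rvert\le\gamma_{n}\lvert\hW\rvert\) componentwise. To match the statement's \(\gamma_{k_0}\), I will exploit the structure of \(W=[P\herm,0]\herm-V\) and split the rounding error off the lower block into the residual term. For the second stage, Assumption~\ref{assump:sv} gives \((\hT+\Delta\hT)u=z\) with \(\lVert\Delta\hT\rVert_2\le\epsilon_{\mathrm{sv}}(k_0)\lVert\hT\rVert_2\), and I set \(\tilde T=\hT+\Delta\hT\). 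For the third stage, \(\fl(\hW u)=\hW u+e\) with \(\lvert e\rvert\le\gamma_{k_0}\lvert\hW\rvert\lvert u\rvert\). The final subtraction contributes a factor \((1+\theta)\) with \(\lvert\theta\rvert\le\urf\).

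Collecting the stages, \(y=\tilde H x+\Delta x_1\), where \(\Delta x_1\) is made up of three pieces: the matrix-vector error \(e\), the mismatch \((\tilde W-\hW)u\) from using \(\hW\) instead of \(\tilde W\) on the left, and the final rounding \(\urf\lvert\tilde Hx\rvert\). The first two are at most \(\gamma\)-type constants times \(\lVert\,\lvert\hW\rvert\,\rVert_2\lVert u\rVert_2\). Since \(\hW\) has \(k_0\) columns of norm \(O(1)\), I will use \(\lVert\,\lvert\hW\rvert\,\rVert_2\le\sqrt{k_0}\lVert\hW\rVert_2\le\sqrt{2k_0}\cdot O(1)\). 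The quantity \(\lVert u\rVert_2=\lVert\tilde T^{-1}\tilde W\herm x\rVert_2\) is bounded by \(\eta\lVert x\rVert_2\) via Lemma~\ref{lem:WT}, exactly as in Theorem~\ref{thm:hH}. This uses \(\lVert\tilde T^{-1}\rVert_2\le2\lVert T^{-1}\rVert_2\), which holds because the combined perturbation \(\tilde T-T\) stays within \(\delta_T\); this is precisely how \(\delta_T=\epsilon_T+3\epsilon_{\mathrm{sv}}(1+\epsilon_T)\) was defined. The perturbation \(\tilde W-W\) is bounded by \(\gamma_{k_0+1}\)-terms that sit inside \(\epsilon_1\). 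Together these give the \(2\sqrt{2k_0}\gamma_{n+2k_0+1}\eta\) term of \(\epsilon_3\).

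For the final rounding term I need \(\lVert\tilde H x\rVert_2\le(\lVert H\rVert_2+\lVert\tilde H-H\rVert_2)\lVert x\rVert_2\). Theorem~\ref{thm:H} gives \(\lVert H\rVert_2\le\sqrt{1+\delta\eta^2}\), and Theorem~\ref{thm:hH}, applied with \(\tilde W,\tilde T\) in place of \(\hW,\hT\), gives \(\lVert\tilde H-H\rVert_2\le\epsilon_2\). The resulting term \(\urf(\sqrt{1+\delta\eta^2}+\epsilon_2)\) completes \(\epsilon_3\). The second claim then follows from \(y=Hx+(\tilde H-H)x+\Delta x_1\), which yields \(\epsilon_4=\epsilon_2+\epsilon_3\).

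The main obstacle is the bookkeeping needed to reproduce the exact constants. I have to check that the componentwise bounds, with \(\delta_W=\sqrt{2k_0}\gamma_{k_0+1}\cdot 2\) or a similar value, when substituted into Theorem~\ref{thm:hH}, produce exactly the \(16\sqrt{k_0}\gamma_{k_0+1}+8k_0\gamma_{k_0+1}^2\) appearing in \(\epsilon_1\). I also have to confirm that \(\delta_T\lVert T^{-1}\rVert_2\le1/2\) covers \(\tilde T\).
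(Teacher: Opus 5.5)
There is a genuine gap, and it sits at the step you flagged. You take \(\tilde W\) to be the perturbed \emph{right} factor from \(z=\fl(\hW\herm x)\). Those are inner products of length \(n\), so this \(\tilde W\) only satisfies \(\lvert\tilde W-\hW\rvert\le\gamma_n\lvert\hW\rvert\), not the required \(\gamma_{k_0}\).

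The damage goes beyond the stated property of \(\tilde W\). Your bounds \(\norm{\tilde W\tilde T\iherm}_2\le\eta\) (used for \(\norm{u}_2\)) and \(\norm{\tilde H-H}_2\le\epsilon_2\) both need \(\norm{\tilde W-W}_2\le 2\sqrt{2k_0}\gamma_{k_0+1}\). That is exactly what produces \(16\sqrt{k_0}\gamma_{k_0+1}+8k_0\gamma_{k_0+1}^2\) in \(\epsilon_1\) and \(4\sqrt{2k_0}\gamma_{k_0+1}\eta\) in \(\epsilon_2\). With your \(\tilde W\), these terms would carry \(\gamma_{n+1}\) instead.

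Your proposed repair does not work. Using the structure of \(W=[P\herm,0]\herm-V\) to move the lower-block error into the residual fails because the \(\gamma_n\) comes from the summation length. The entries in the top \(k_0\) rows carry it too: in recursive summation the earliest terms accumulate the most rounding, whatever \(W\) looks like. Alternatively, you might move the whole inner-product error into the residual while keeping something other than the left factor on the left. Then you must bound \(\norm{u}_2=\norm{\tilde T^{-1}(\hW+\Delta W_1)\herm x}_2\). That reintroduces either a \(\gamma_n\)-dependent \(\eta\) or a bare \(\norm{\tilde T^{-1}}_2\) term, and neither appears in \(\epsilon_3\).

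The missing idea is to swap roles and take \(\tilde W\) from the \emph{last} matrix--vector product. Since \(\hW u\) has inner dimension \(k_0\), \(\fl(\hW u)=(\hW+\Delta W_2)u\) with \(\lvert\Delta W_2\rvert\le\gamma_{k_0}\lvert\hW\rvert\). So \(\tilde W=\hW+\Delta W_2\) satisfies the statement automatically. Also \(\lvert W-\tilde W\rvert\le\gamma_{k_0+1}\lvert W\rvert\), which gives your guessed \(\delta_W=2\sqrt{2k_0}\gamma_{k_0+1}\).

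Now take \(\tilde T=\hT+\Delta T\) from Assumption~\ref{assump:sv}. The computed vector is \(x-\tilde W\tilde T^{-1}(\hW+\Delta W_1)\herm x+\Delta x_3\), hence \(\Delta x_1=\Delta x_3+\tilde W\tilde T^{-1}(\Delta W_2-\Delta W_1)\herm x\). The length-\(n\) error is now premultiplied by \(\tilde W\tilde T^{-1}\), whose norm is at most \(\eta\) by Lemma~\ref{lem:WT}. This gives the bound \(\sqrt{k_0}\gamma_{n+k_0}\eta\norm{\hW}_2\norm{x}_2\), with no separate estimate of \(\norm{u}_2\) needed.

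From there, your treatment of the final rounding goes through as you describe, using \(\urf\norm{\tilde H}_2\le\urf(\sqrt{1+\delta\eta^2}+\epsilon_2)\) plus a \(\urf\)-multiple of the term above absorbed into \(\gamma_{n+2k_0+1}\). So does your derivation of \(\epsilon_4=\epsilon_2+\epsilon_3\).
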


\begin{proof}
We have
\begin{align*}
\fl\big(\hT^{-1}\hW\herm x\big)&=
(\hT+\Delta T)^{-1}\big(\hW+\Delta W_1\big)\herm x,
\end{align*}
where \(\lvert\Delta W_1\rvert\le\gamma_n\big\lvert\hW\big\rvert\) and
\(\lVert\Delta T\rVert_2\le\epsilon_{\mathrm{sv}}(k_0)\lVert\hT\rVert_2\).
Let \(\tilde{T}=\hT+\Delta T\) and then
\begin{align*}
\fl\big(x-\hW\hT^{-1}\hW\herm x\big)=x-\big(\hW+\Delta W_2\big)\tilde{T}^{-1}\big(\hW+\Delta W_1\big)\herm x+\Delta x_3,
\end{align*}
where \(\lvert\Delta W_2\rvert\le\gamma_{k_0}\big\lvert\hW\big\rvert\) and
\[
\lvert\Delta x_3\rvert\le\urf\cdot\bigl\lvert x-\bigl(\hW+\Delta W_2\bigr)\tilde{T}^{-1}\bigl(\hW+\Delta W_1\bigr)\herm x\bigr\rvert.
\]
Let \(\tilde{W}=\hW+\Delta W_2\) and
we can verify that \(\fl\big(x-\hW\hT^{-1}\hW\herm x\big)=\tilde{H} x+\Delta x_1\), where
\begin{align*}
\Delta x_1=\Delta x_3+\tilde{W}\tilde{T}^{-1}(\Delta W_2-\Delta W_1)\herm x.
\end{align*}
Let \(\Delta x_4\) denote the second term of \(\Delta x_1\). Then
\[
\lVert\Delta x_4\rVert_2\le\sqrt{k_0}\gamma_{n+k_0}\lVert\tilde{W}\tilde{T}^{-1}\rVert_2\lVert\hW\rVert_2\lVert x\rVert_2.
\]
For \(\Delta x_3\), we have
\begin{align*}
\lvert\Delta x_3\rvert&\le\urf\lvert\tilde{H} x+\Delta x_4\rvert,\\
\lVert\Delta x_3\rVert_2&\le\urf(\lVert\tilde{H}\rVert_2\lVert x\rVert_2+\lVert\Delta x_4\rVert_2).
\end{align*}
Thus
\[
\lVert\Delta x_1\rVert_2\le\lVert\Delta x_3\rVert_2+\lVert\Delta x_4\rVert_2
\le\sqrt{k_0}\gamma_{n+k_0+1}\lVert\tilde{W}\tilde{T}^{-1}\rVert_2\lVert\hW\rVert_2\lVert x\rVert_2+\urf\lVert\tilde{H}\rVert_2\lVert x\rVert_2.
\]

It is easy to verify
\begin{align*}
\lVert T-\tilde{T}\rVert_2&\le\epsilon_T(k_0)+
\epsilon_{\mathrm{sv}}(k_0)\lVert\hT\rVert_2\\
&\le\epsilon_T(k_0)+
\epsilon_{\mathrm{sv}}(k_0)
\big(1+\lVert V\rVert_2\lVert P\rVert_2\big)(1+\epsilon_T(k_0))\\
&\le\epsilon_T(k_0)+3\epsilon_{\mathrm{sv}}(k_0)
(1+\epsilon_T(k_0)).
\end{align*}
Since \(\hW=[P\herm,0]\herm-V+\Delta W\) where \(\abs{\Delta W}\le\urf\abs{[P\herm,0]\herm-V}=\urf\abs{W}\),
we have
\[
\lvert W-\tilde{W}\rvert\le \big(\urf+(1+\urf)\gamma_{k_0}\big)\lvert W\rvert
\le\gamma_{k_0+1}\lvert W\rvert.
\]
Thus \(\lVert W-\tilde{W}\rVert_2\le\sqrt{k_0}\gamma_{k_0+1}\lVert W\rVert_2
\le2\sqrt{2k_0}\gamma_{k_0+1}\).
Note that \(\lVert\tilde{W}\herm\tilde{W}-\tilde{T}-\tilde{T}\herm\rVert_2
\le\epsilon_1\).
By Lemma~\ref{lem:WT}, \(\lVert\tilde{W}\tilde{T}^{-1}\rVert_2\le\eta\).
Using Theorem~\ref{thm:hH}, we obtain
\[
\lVert H-\tilde{H}\rVert_2\le(\delta_T+8k_0\gamma_{k_0+1}^2)\eta^2+
4\sqrt{2k_0}\gamma_{k_0+1}\eta=\epsilon_2.
\]
Therefore,
\begin{align*}
\frac{\lVert\Delta x_1\rVert_2}{\lVert x\rVert_2}&\le
\sqrt{k_0}\gamma_{n+k_0+1}\lVert\tilde{W}\tilde{T}^{-1}\rVert_2\lVert\hW\rVert_2+\urf\lVert\tilde{H}\rVert_2\\
&\le2(1+\sqrt{k_0}\urf)\sqrt{2k_0}\gamma_{n+k_0+1}\cdot\eta+\urf(\lVert H\rVert_2+\epsilon_2)\\
&\le2\sqrt{2k_0}\gamma_{n+2k_0+1}\cdot\eta+\urf(\sqrt{1+\delta \eta^2}+\epsilon_2)=\epsilon_3.
\end{align*}
Let \(\Delta x_2=(\tilde{H}-H)x+\Delta x_1\) and then
\[
\frac{\lVert\Delta x_2\rVert_2}{\lVert x\rVert_2}
\le\norm{\tilde{H}-H}+\frac{\lVert\Delta x_1\rVert_2}{\lVert x\rVert_2}\le\epsilon_2+\epsilon_3=\epsilon_4.\qedhere
\]
\end{proof}

With the help of Lemma~\ref{lem:Hx}, we are ready to prove
Theorem~\ref{thm:orth}.

\begin{proof}[Proof of Theorem~\ref{thm:orth}]
By Lemma~\ref{lem:Hx}, we have
\[
\hat{Q}=H\begin{bmatrix}0\\
~\underline{\hat Q}~\end{bmatrix}+\Delta Q,\quad \lVert\Delta Q\rVert_{\fro}\le\lVert\underline{\hat Q}\rVert_{\fro}\epsilon_4.
\]
It is easy to verify
\begin{align*}
\lVert\hat{Q}\herm \hat{Q}-I\rVert_2&\le
\epsilon_{\mathrm{orth}}(n-k_0,k)
+\lVert\underline{\hat Q}\rVert_2^2\lVert H\herm H-I_n\rVert_2
+2\lVert H\rVert_2\lVert\underline{\hat Q}\rVert_2\lVert\underline{\hat Q}\rVert_\fro\epsilon_{4}+\lVert\underline{\hat Q}\rVert_\fro^2\epsilon_4^2\\
&\le\epsilon_{\mathrm{orth}}(n-k_0,k)
+\lVert\underline{\hat Q}\rVert_2^2
(\delta \eta^2+2\sqrt{k}\lVert H\rVert_2\epsilon_4+k\epsilon_4^2)\\
&\le\epsilon_{\mathrm{orth}}(n-k_0,k)
+2(\delta \eta^2+2\sqrt{k(1+\delta \eta^2)}\epsilon_4+k\epsilon_4^2).
\end{align*}
Let \(\hat{q}_i\) and \(\underline{\hat{q}}_i\) denote the \(i\)th
column of \(\hat{Q}\) and \(\underline{\hat{Q}}\), respectively.
Then according to Lemma~\ref{lem:Hx}
\[
\hat{q}_i=\Big(\tilde{H}_i\begin{bmatrix}0\\
~\underline{\hat{q}}_i~\end{bmatrix}+\Delta q_i\Big),
\qquad \tilde{H}_i=I-\tilde{W}_i\tilde{T}_i^{-1}\tilde{W}_i\herm,
\qquad \lVert\Delta q_i\rVert_2/\lVert \underline{\hat{q}}_i\rVert_2\le\epsilon_3,
\]
where \(\tilde{W}_i\) and \(\tilde{T}_i\)
are the perturbed counterparts of the matrices \(W\) and \(T\).
By Theorem~\ref{thm:hH}, we obtain
\[
\lVert V\herm\tilde{H}_i-[P\herm,0]\herm\rVert_2\le
2\sqrt{2k_0}\gamma_{k_0+1}+
(\delta+\delta_T+2\sqrt{2k_0}
\gamma_{k_0+1}\lVert V\rVert_2)\eta.
\]
Thus
\begin{align*}
\frac{\lVert V\herm \hat{q}_i\lVert_2}{\lVert\underline{\hat{q}}_i\rVert_2}&\le
2\sqrt{2k_0}\gamma_{k_0+1}+
(\delta+\delta_T+4\sqrt{k_0}\gamma_{k_0+1})\eta+\lVert V\rVert_2\epsilon_3\\
&\le2\sqrt{2k_0}\gamma_{k_0+1}+
\epsilon_1\eta+\sqrt{2}\epsilon_3.
\end{align*}
Therefore,
\begin{align*}
\lVert V\herm \hat{Q}\rVert_\fro
\le\sqrt{2k}\,(2\sqrt{2k_0}\gamma_{k_0+1}+
\epsilon_1\eta+\sqrt{2}\epsilon_3).
\end{align*}
To analysis the residual we have
\[
\fl(A-WT\iherm W\herm A)= H\herm A+\Delta A,
\qquad \lVert\Delta A\rVert_\fro\le\lVert A\rVert_\fro\epsilon_4.
\]
By Theorem~\ref{thm:H}, \(H[P\herm,0]\herm=V\)
and then \(\lVert H_{:,1:k_0}-VP\herm\rVert_2\le\delta_2\lVert H\rVert_2\).
Let \(\tilde{A}=H\herm A+\Delta A\).
We have
\[
\hat{S}=P\herm \tilde{A}_{1:k_0,:}+\Delta S, \quad
\lvert\Delta S\rvert\le\gamma_{k_0}\lvert P\herm\rvert
\lvert\tilde{A}_{1:k_0,:}\rvert.
\]
Note that
\begin{align*}
A-\hat{Q}\hat{R}-V\hat{S}=A-H\begin{bmatrix}0\\~\underline{\hat Q}~\end{bmatrix}\hat{R}+\Delta Q\hat{R}-V(P\herm\tilde{A}_{1:k_0,:}+\Delta S).
\end{align*}
We have
\begin{align*}
\Big\lVert A-H\begin{bmatrix}0\\~\underline{\hat Q}~\end{bmatrix}\hat{R}-VP\herm\tilde{A}_{1:k_0,:}\Big\rVert_2&\le
\norm{A-H\tilde{A}}_2+
\Big\lVert H\tilde{A}-H\begin{bmatrix}0\\~\underline{\hat Q}~\end{bmatrix}\hat{R}-H_{:,1:k_0}\tilde{A}_{1:k_0,:}\Big\rVert_2\\
&\hspace{12pt}+\lVert(H_{:,1:k_0}-VP\herm)\tilde{A}_{1:k_0,:}\rVert_2\\
&\le\delta\lVert A\rVert_2 \eta^2+\lVert H\Delta A\rVert_2
+\lVert H\rVert_2\lVert \tilde{A}\rVert_2
(\epsilon_{\mathrm{res}}(n-k_0,k)+\delta_2).
\end{align*}
Thus
\begin{align*}
\frac{\lVert A-\hat{Q}\hat{R}-V\hat{S}\lVert_2}{\lVert A\rVert_2}&\le
\delta \eta^2+\sqrt{k}\lVert H\rVert_2\epsilon_4
+\norm{H}_2(\norm{H}_2+\sqrt{k}\epsilon_4)
(\epsilon_{\mathrm{res}}(n-k_0,k)+\delta_2)\\
&\hspace{12pt}+\frac{\lVert\Delta Q\rVert_2\lVert\hat{R}\rVert_2+\lVert V\rVert_2\lVert\Delta S\rVert_2}{\lVert A\rVert_2}\\
&\le\delta \eta^2+\sqrt{k}\lVert H\rVert_2\epsilon_4
+\norm{H}_2(\norm{H}_2+\sqrt{k}\epsilon_4)
(\epsilon_{\mathrm{res}}(n-k_0,k)+\delta_2)\\
&\hspace{12pt}+(2\sqrt{k}\epsilon_4+2k_0\gamma_{k_0})
(\norm{H}_2+\sqrt{k}\epsilon_4)\\
&\le\delta \eta^2+(\sqrt{1+\delta \eta^2}+\sqrt{k}\epsilon_4)^2
(\epsilon_{\mathrm{res}}(n-k_0,k)+\delta_2+3\sqrt{k}\epsilon_4+2k_0\gamma_{k_0}),
\end{align*}
which completes the proof.
\end{proof}

\phantomsection

\end{document}